\newtheorem{theorem}{Theorem}[section]
\newtheorem{lemma}[theorem]{Lemma}
\newtheorem{corollary}[theorem]{Corollary}
\newtheorem{definition}[theorem]{Definition}
\title{Relations between connected and self-avoiding walks in a digraph}
\author{Thibault Espinasse\footnote{Universit\'e Lyon 1 Claude Bernard, laboratoire de math\'ematiques Camille Jordan} ~ and Paul Rochet\footnote{Universit\'e de Nantes, laboratoire de math\'ematiques Jean Leray, paul.rochet@univ-nantes.fr}}
\date{}
\newcommand{\sgn}{\operatorname{sgn}}
\newcommand{\id}{\operatorname{I}}
\newcommand{\tr}{\operatorname{tr}}
\newcommand{\adj}{\operatorname{adj}}
\newenvironment{proof}{\noindent \textit{Proof.}}{\hfill $\square$\bigskip}
\newenvironment{remark}{\noindent \textit{Remark.}}{\bigskip}
\begin{document}

\maketitle 
\begin{abstract} Walks in a directed graph can be given a partially ordered structure that extends to possibly unconnected objects, called hikes. Studying the incidence algebra on this poset reveals unsuspected relations between walks and self-avoiding hikes. These relations are derived by considering truncated versions of the characteristic polynomial of the weighted adjacency matrix, resulting in a collection of matrices whose entries enumerate the self-avoiding hikes of length $\ell$ from one vertex to another. 
\end{abstract}

\noindent \small \textbf{Keywords:} Directed graph; poset; characteristic polynomial; weighted adjacency matrix; incidence algebra.\\

\noindent \small \textbf{MSC: } 	 	05C22,  05C30,  	05C38\\

\normalsize

\section{Introduction}

A directed graph $G=(V,E)$ is defined by a vertex set $V=\{v_1,...,v_N\}$ and set $E$ of ordered pairs of vertices representing the directed edges of $G$. Directed graphs, or digraphs, have been extensively used in the literature as mathematical models to describe actual phenomena such as social interactions \cite{carrington2005models}, road traffic \cite{han2012extended}, physical processes \cite{burioni2005random} or random walks \cite{blanchard2011random} among many others. In most models, it is convenient to allocate weights to each edge of the graph in order to incorporate some additional information. For example, a weight $\omega_{ij}$ between two nodes $v_i, v_j$ may serve defining the transition probability of a random walk, the speed limit or the type of road in the traffic network and so on. A finite graph on $N$ vertices is then characterized by its \textit{weighted adjacency matrix} $\mathsf{W}= (\omega_{ij})_{i,j=1,...,N}$, which accounts for the level of interaction between vertices. In this way, every edge of $G$ is identified with a weight $\omega_{ij}$.\\

An oriented walk on the digraph $G$ is defined as a succession of contiguous edges $\omega_{ij}$. Seeing the weights $\omega_{ij}$ as formal variables, a walk $w$ of length $\ell$ from $v_i$ to $v_j$ can be viewed as a degree $\ell$ monomial $w= \omega_{i i_1} \omega_{i_1 i_2} ... \omega_{i_{\ell-1} j}$. The weighted adjacency matrix then provides a practical tool to handle walks on the graph, as they can be derived from analytical transformations of $\mathsf{W}$. For instance, the $(i,j)$ entry of $\mathsf{W}^2$, given by $\mathsf{W}^2_{ij} = \sum_{k} \omega_{ik} \omega_{kj}$, enumerates all walks of length $2$ from $v_i$ to $v_j$. The introduction of the weighted adjacency matrix $\mathsf{W}$ to describe the walks on a graph goes back to the $60$'s. In \cite{harary1962determinant} and \cite{ponstein1966self}, the spectral properties of a graph are investigated via the determinant and characteristic polynomial of $\mathsf{W}$. Digraphs also provide a useful tool to compute the determinant and minors of sparse matrices, as discussed in \cite{maybee1989matrices}. For general results on spectral graph theory, we refer to \cite{chung1997spectral, cvetkovic1997eigenspaces}. \\

In \cite{ponstein1966self}, the author shows that the coefficients of the characteristic polynomial of $\mathsf{W}$ can be interpreted in term of the self-avoiding cycles in $G$. In this paper, we derive a similar result concerning self-avoiding hikes, defined as a generalization of walks to possibly unconnected sequences of edges. We construct a collection of polynomials of $\mathsf{W}$ whose entries enumerate the self-avoiding hikes of a given length from one vertex to another. The polynomials are obtained as Cauchy products of the characteristic polynomial coefficients with the sequence of successive powers of $\mathsf{W}$. \\

The analytical expression linking the self-avoiding hikes and the walks on the graph hides a deeper connection when considering each hike individually. Precisely, the relation can be investigated in the partially ordered set formed by the hikes. In this context, combinatorial properties arise when studying functions of the hikes in the reduced incidence algebra of this poset. In particular, we show that the number of different ways to travel a closed hike can be expressed in term of its self-avoiding divisors via a Mobius-like inversion on this poset. Another result on the decomposition of a walk into self-avoiding components is then derived. \\

The paper is organized as follows. Definitions are introduced in Section \ref{sec:2} as well as the preliminary result. Section \ref{sec:3} is devoted to the study of the different relations between self-avoiding hikes and walks, where many combinatorial properties are investigated. The results are verified on specific examples in Section \ref{sec:4}.

\section{Notations and preliminary results}\label{sec:2}

Let $G=(V,E)$ be a labeled directed graph, or \textit{digraph}, with finite vertex set $V = \{v_1,...,v_N \}$ and edge set $E$ which may contain loops. 
The adjacency matrix of $G$ is defined as the $N \times N$ matrix $\mathsf{A}$ with entries $a_{ij}$ equal to one if $v_i$ is connected to $v_j$ and zero otherwise. Because $G$ is directed, $\mathsf{A}$ may not be symmetric ($v_i$ can be connected to $v_j$ without $v_j$ being connected to $v_i$). In this paper, an edge always refers to a directed edge. 

The adjacency matrix can be used to derive numerous properties of a graph. For instance, the $(i,j)$ entry of $\mathsf{A}^\ell$ gives the number of walks of length $\ell$ from $v_i$ to $v_j$. When one is interested in each walk specifically, a useful tool is to allocate a weight, or variable, to each non-zero entry of $\mathsf{A}$. In this way, the digraph $G$ is characterized by its weighted adjacency matrix
$$\mathsf{W} = (\omega_{ij})_{i,j=1,...,N} $$
where the $\omega_{ij}$'s are real variables, setting $\omega_{ij}=0$ whenever there is no edge from $v_i$ to $v_j$. An edge of $G$ can then be identified with a non-zero variable $\omega_{ij}$ and a walk $w$ from $v_i$ to $v_j$ with the product $w = \omega_{i i_1} \omega_{i_1 i_2} ... \omega_{i_{\ell-1} j}$ of the edges composing it (two walks are thus considered equal if they are composed of the same edges, counted with multiplicity, regardless of their order). The walk $w$ is closed (a cycle) if $i=j$ and open (a path) otherwise. Moreover, $w$ is simple if it does not cross the same vertex twice, that is, if the indices $i,i_1,...,i_{\ell-1},j$ are mutually different, with the possible exception $i=j$ if $w$ is closed. Loops $\omega_{ii}$ and backtracks $\omega_{ij} \omega_{ji}$ are considered cycles of length $1$ and $2$ respectively. \\

The representation as monomials in the formal variables $\omega_{ij}$ provides a simple multiplicative structure on walks. The cycle-erasing procedure of Lawler \cite{lawler1987loop} shows that a walk from $v_i$ to $v_j$ can always be decomposed as the product of a simple walk from $v_i$ to $v_j$ and cycles, as illustrated in Figure \ref{fig:0}. However, the reverse is not true in general as the product of a simple walk and simple cycles might not be connected. In this paper, we define a new object, called hike, which extends the definition of a walk by relaxing the connectedness condition. 

\begin{definition} A hike from $v_i$ to $v_j$ is a monomial $h= \omega_{i_1 j_1}  ... \omega_{i_{\ell} j_\ell}$ that can be decomposed into the product of a simple walk from $v_i$ to $v_j$ and simple cycles.
\end{definition}

Properties of walks naturally extend to hikes. A hike is closed if it is a product of simple cycles and open otherwise. By convention, the trivial cycle $1$ is considered a closed hike of length $0$. Similarly as for walks, a hike is self-avoiding if it does not cross the same vertex twice. The connected components of a self-avoiding hike are simple and vertex-disjoint. In this setting, a walk can be viewed as a connected hike. \\

In the sequel, the length of a hike $h$ (its degree) will be denoted by $\ell(h)$ and its number of connected components by $n(h)$. A walk is a connected hike, that is, a hike $h$ such that $n(h)=1$. Moreover, we denote by $V(h)$ the set of vertices crossed by $h$ and $\vert V(h) \vert$ its cardinal. Remark that an open hike $h$ is self-avoiding if, and only if, $\vert V(h) \vert = \ell(h)+1$ while for a closed hike, self-avoiding is equivalent to $\vert V(h) \vert = \ell(h)$. \vspace{-0.4cm}

\begin{multicols}{2}

\begin{figure}[H]
	\centering
\hspace*{-0.6cm} \includegraphics[width=0.35\textwidth]{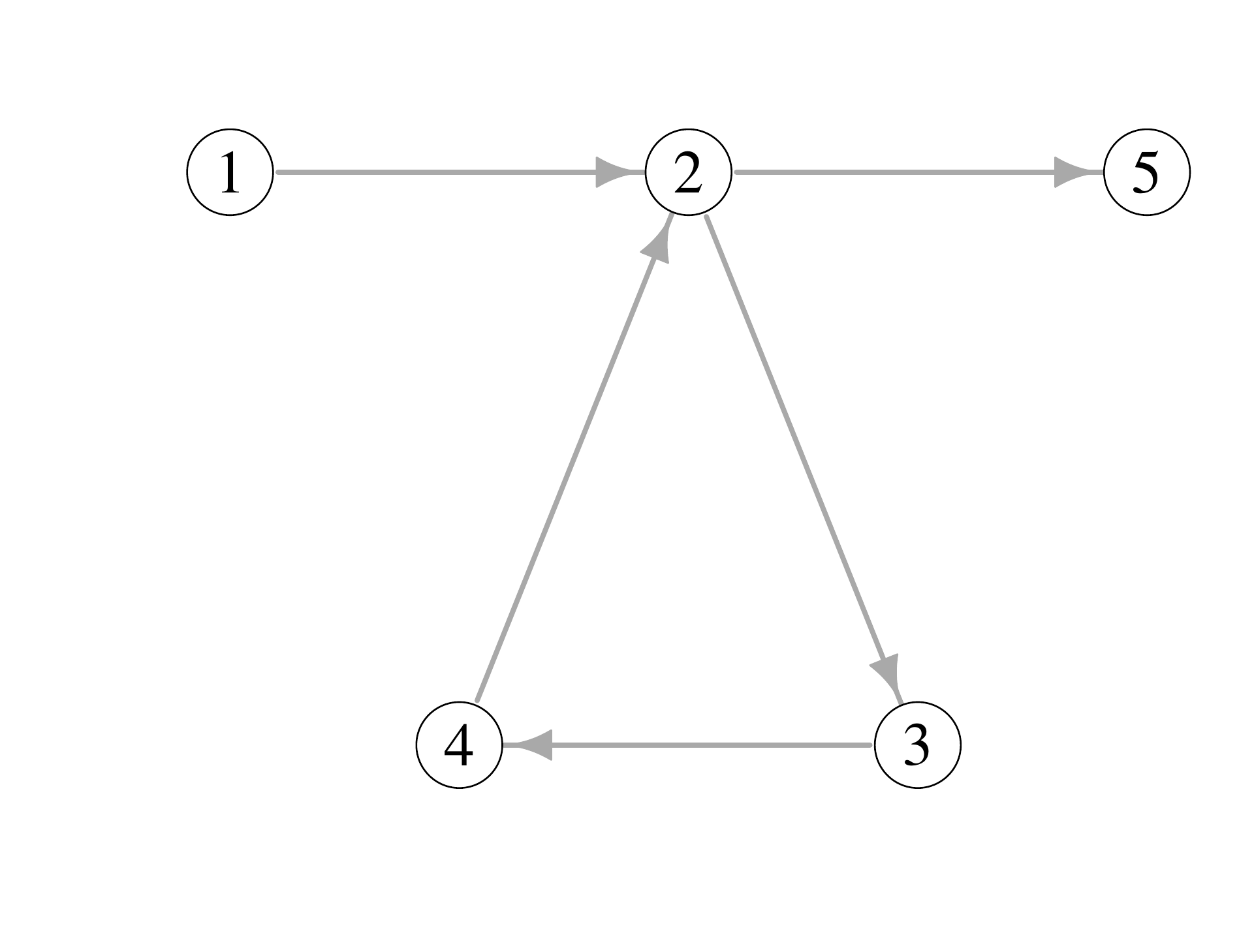} 
	\vspace{-0.6cm}
	\caption{\small{The walk $w= \omega_{12}\omega_{23}\omega_{34}\omega_{42}\omega_{25}$ from $v_1$ to $v_5$ is the product of the simple walk $\omega_{12}\omega_{25}$ and simple cycle $\omega_{23}\omega_{34}\omega_{42}$.}}
	\label{fig:0}
\end{figure}

\columnbreak

\begin{figure}[H]
	\centering
 \hspace*{1.1cm} \includegraphics[width=0.35\textwidth]{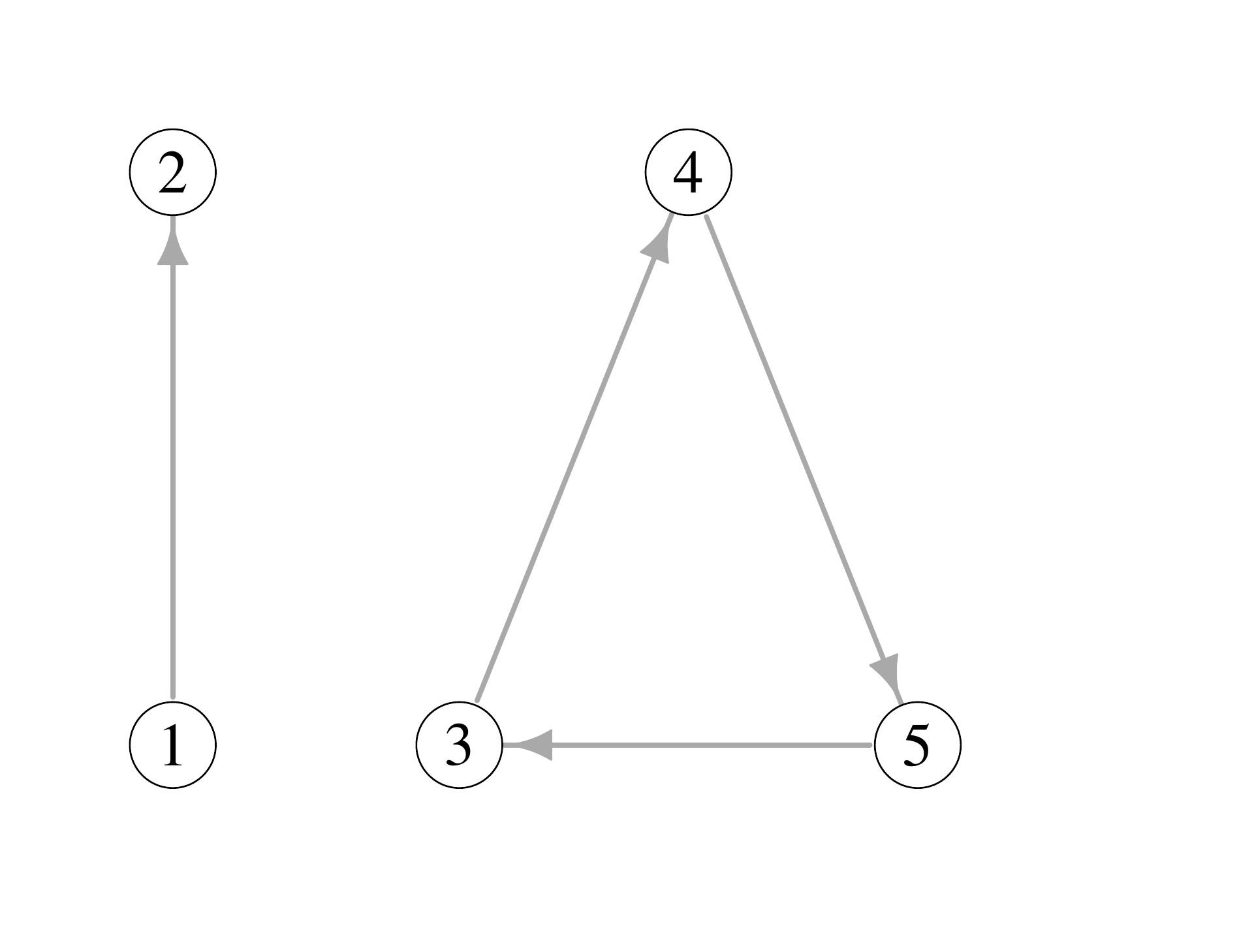}
	\vspace{-0.6cm}
	\caption{\small{The self-avoiding hike $h =\omega_{12} \omega_{34}\omega_{45}\omega_{53}$ from $v_1$ to $v_2$ contains $n(h)=2$ connected components: the path $\omega_{12}$ and the cycle $\omega_{34}\omega_{45}\omega_{53}$.}}
	\label{fig:0b}
\end{figure}

\end{multicols}

\noindent The following notations are used throughout the paper.
\begin{itemize}
  \item[-] $\mathcal W$ denotes the set of walks. 
  \item[-] $\mathcal H$ denotes the set of hikes (open and closed). 
  \item[-] $\mathcal C$ denotes the set of closed hikes.
	\item[-] $\mathcal S$ denotes the set of self-avoiding hikes. 
\end{itemize}
For these sets, we may specify the end-vertices in index and/or their length in exponent, e.g. $\mathcal H^\ell_{ij}$ is the set of hikes of length $\ell$ from $v_i$ to $v_j$. Remark that $\mathcal W$, $\mathcal C$ and $\mathcal S$ are subsets of $\mathcal H$, while $\mathcal S_{ii}$ and $\mathcal W_{ii}$ are subsets of $ \mathcal C$ for all $i=1,...,N$. The set of self-avoiding closed hikes is written as $\mathcal C \cap \mathcal S$. \\ 

It is known that the $\ell$-th power of $\mathsf{W}$ enumerates with multiplicity the walks of length $\ell$ on the graph. The $(i,j)$ entry of $\mathsf{W}^\ell$ is a homogenous degree $\ell$ polynomial attributing to each walk $w$ of length $\ell$, a coefficient $f_{ij}(w)$ counting the number of ways to write $w$ as a succession of contiguous edges starting from $v_i$ and ending at $v_j$ (the function $f_{ij}$ is computed on some examples in Figure \ref{fig:22}). The formal series associated to the functions $f_{ij}$ follows by the identity
\begin{equation}\label{fij} (\id - \mathsf{W} )^{-1}_{ij} = \big(1+\mathsf{W}+\mathsf{W}^2+.... \big)_{ij} = \sum_{w \in \mathcal W} f_{ij}(w) w,   \end{equation}
which holds whenever $||| \mathsf{W} ||| := \sup_{v \in \mathbb R^N, \Vert v \Vert=1} \Vert \mathsf{W} v \Vert < 1$, with $\Vert . \Vert$ the Euclidean norm in $\mathbb R^N$. Remark that if $w$ is an open walk, there is at most one couple $(i,j)$ for which $f_{ij}(w)$ is non-zero. This property is no longer verified for a closed walk $c$ in which case $f_{ii}(c)$ may take different positive values for different nodes $v_i \in V(c)$, as illustrated in Figure \ref{fig:22}. \vspace{-0.4cm}

\begin{figure}[H]
	\centering
		\includegraphics[width=0.54\textwidth]{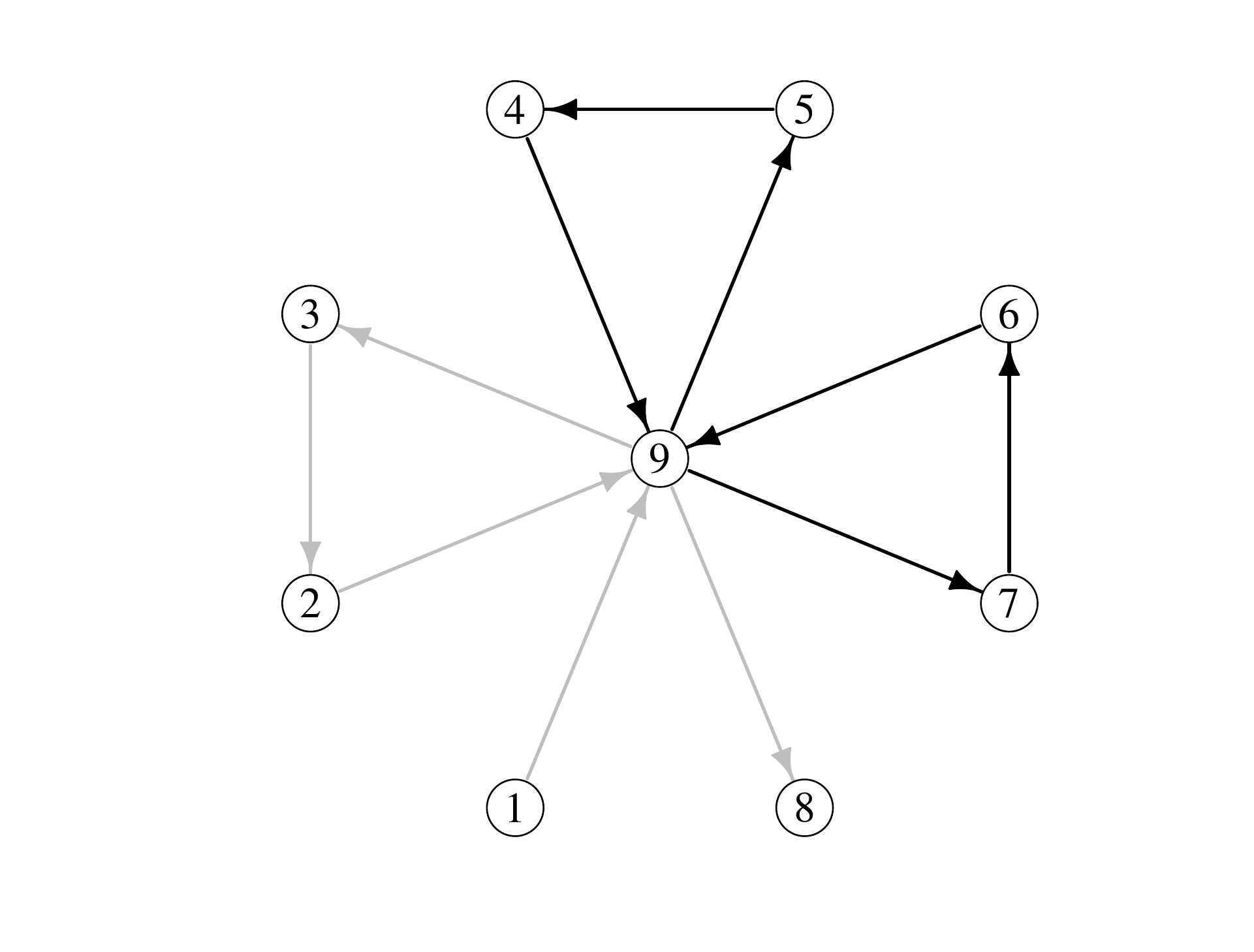}
		\vspace{-0.8cm}
		\caption{\small{The walk $w= \omega_{19}\omega_{93}\omega_{32}\omega_{29}\omega_{95}\omega_{54}\omega_{49}\omega_{97}\omega_{76}\omega_{69}\omega_{98}$ (gray and black edges) has $f_{18}(w)=6$ contiguous representations starting from $v_1$ and ending in $v_8$. Each representation relies on an ordering of the simple cycles $\omega_{93}\omega_{32}\omega_{29} $, $\omega_{95}\omega_{54}\omega_{49}$ and $\omega_{97}\omega_{76}\omega_{69}$ to travel $w$. The cycle $c=\omega_{95}\omega_{54}\omega_{49}\omega_{97}\omega_{76}\omega_{69}$ (black edges) has $f_{99}(c) =2$ contiguous representations starting from $v_9$ and $f_{ii}(c)=1$ contiguous representation for any other vertex $v_i \in V(c) \setminus \{v_9 \}$.}}
	\label{fig:22}
\end{figure}

Closed hikes also play an essential part in analytical graph theory. A simple reason is that a self-avoiding closed hike can be associated to a permutation $\sigma$ on a subset of $\{1,...,N\}$ by the relation $ c_\sigma = \prod_{v_i \in V(c_\sigma)} \omega_{i \sigma(i)}$. Using that the number of connected components $n(c_\sigma)$ is linked to the signature of $\sigma$ through the identity $\sgn(\sigma) = (-1)^{ N - n(c_\sigma)}$, we obtain an expression of the determinant of $\mathsf{W}$ by
$$ \operatorname{det}(\mathsf{W}) =  \sum_{\sigma} \sgn(\sigma) \ \omega_{1\sigma(1)}...\omega_{N\sigma(N)} = (-1)^N \sum_{c \in \mathcal C\cap \mathcal S^N} (-1)^{n(c)} c $$
where the first sum is taken over all permutations over $\{1,...,N\}$ and the second sum over all self-avoiding closed hikes of length $N$. A more general formula, given in Theorem 1 in \cite{ponstein1966self}, links the coefficients $\psi_k$ of the characteristic polynomial
\begin{equation*} \chi(\lambda) = \det \big(\lambda \id - \mathsf{W} \big) = \sum_{k = 0}^{N} \psi_k \lambda^{N-k}\end{equation*}
with the self-avoiding closed hikes of length $k$ by
\begin{equation}\label{psi} \psi_0=1, \ \ \psi_k = \sum_{c \in \mathcal C\cap \mathcal S^k} (-1)^{n(c)} c, \ \ k = 1,2,...  \end{equation}
the coefficient $\psi_k$ being trivially zero for $k >N$.\\

The coefficient $(-1)^{n(c)}$ is reminiscent of a Mobius function. In fact, the function $\mu$ defined over $\mathcal C$ by $\mu(1) = 1$ and
\begin{equation}\label{mu} \mu(c) := \left\{ \begin{array}{cl} (-1)^{n(c)} & \text{if $c$ is self-avoding} \\
0 & \text{otherwise,} \end{array} \right. \end{equation}
is the Mobius function of the trace monoid of a partially commutative version of closed hikes, studied in \cite{cartier1969} under the name circuit. The particular value of the characteristic polynomial
\begin{equation}\label{mobdet} \chi(1) = \det (\id - \mathsf{W} ) = \sum_{k=0}^N \psi_k = \sum_{c \in \mathcal C} \mu(c) c \end{equation}
will turn out to be particularly important for our purposes, as it gives the formal series associated to $\mu$. Although the definition of $\mu$ is restricted to closed hikes, some of its properties have direct repercussions on open hikes. This is due to the fact that an open hike $h$ between two different vertices $v_i, v_j$ can be expressed as a closed hike to which the edge $\omega_{ji}$ has been removed. Actually, one verifies easily the following equivalence for $i \neq j$
\begin{equation}\label{equiv1} h \in \mathcal H_{ij} \ \Longleftrightarrow \ h \omega_{ji} \in \mathcal C \end{equation}
where we recall that $\mathcal H_{ij}$ is the set of hikes from $v_i$ to $v_j$. This property provides a natural extension of $\mu$ to open hikes. For all directed pairs of vertices $(i,j)$, define
\begin{equation}\label{gij} \mu_{ij}(h) := - \mu(h \omega_{ji}) = \left\{ \begin{array}{cl}  (-1)^{n(h)+1} & \text{if $i \neq j$ and $h \in \mathcal S_{ij}$} \\ (-1)^{n(h)} & \text{if $i=j$, $h \in \mathcal C \cap \mathcal S$ and $v_i \notin V(h)$,} \\  
0 & \text{otherwise.} \end{array} \right. \end{equation}


If $i \neq j$, the function $\mu_{ij}$ only takes non-zero values for self-avoiding hikes from $v_i$ to $v_j$. In particular, $\mu_{ij}(1)=0$ and $\mu_{ij}(h)=1$ if $h$ is a simple path. On the other hand, $\mu_{ii}(h)$ is non-zero only if $h$ is closed and does not cross $v_i$. We have for instance $\mu_{ii}(1)= \mu(\omega_{ii}) = 1$ (the convention $\mu_{ii}(1)=1$ instead of $-1$ justifies the minus sign in the definition of $\mu_{ij}$). Remark moreover that $\mu_{ij}(h)$ is null for all hikes $h$ of length $\ell(h) \geq N$ since a self-avoiding hike on the graph has maximal length $N$. \\

The main result of this paper (Theorem \ref{th:1}) exhibits a duality between the functions $f_{ij}$ counting the number of connected representations of a hike and the functions $\mu_{ij}$ whose supports are restricted to self-avoiding hikes. The duality is actually a consequence of Lemma \ref{lemma}, which expresses the matrix formal series $\mathsf{M} = (m_{ij})_{i,j=1,...,N}$ of $\mu_{ij}$, defined by
\begin{equation}\label{x} m_{ij} :=  \sum_{h \in \mathcal H} \mu_{ij}(h) h , \ \ \ i,j=1,...,N, \end{equation}
in function of the powers of $\mathsf{W}$.

\begin{lemma}\label{lemma} If $||| \mathsf{W} ||| <1$,
\begin{equation}\label{main} \mathsf{M} = \sum_{\ell \geq 0} \sum_{k=0}^{\ell} \psi_k \mathsf{W}^{\ell - k}. \end{equation}
where the $\psi_k, k=0,1,...$ are the coefficients of the characteristic polynomial defined in \eqref{psi}.
\end{lemma}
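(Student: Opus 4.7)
My plan is to express the right-hand side of \eqref{main} as a Cauchy product, identify it with the adjugate matrix of $\id - \mathsf{W}$ via Cramer's rule, and then match its entries with those of $\mathsf{M}$ by a combinatorial expansion. Since $\psi_k = 0$ for $k > N$, the scalar series $\sum_k \psi_k$ is a finite sum equal to $\chi(1)$; rearranging the double sum (setting $m = \ell - k$) gives
$$\sum_{\ell \geq 0} \sum_{k=0}^{\ell} \psi_k \mathsf{W}^{\ell-k} = \Big(\sum_{k=0}^{N} \psi_k\Big)\Big(\sum_{m \geq 0} \mathsf{W}^m\Big) = \chi(1)\, (\id - \mathsf{W})^{-1} = \operatorname{adj}(\id - \mathsf{W}),$$
the Neumann series converging because $|||\mathsf{W}||| < 1$ and the last equality being Cramer's rule. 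It thus suffices to establish $\mathsf{M} = \operatorname{adj}(\id - \mathsf{W})$.

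For the diagonal entries, $\operatorname{adj}(\id - \mathsf{W})_{ii}$ is the principal minor obtained by deleting row $i$ and column $i$ from $\id - \mathsf{W}$, which is the value at $\lambda = 1$ of the characteristic polynomial of the weighted adjacency matrix of the subgraph $G \setminus \{v_i\}$. Applying Ponstein's identity \eqref{psi}--\eqref{mobdet} to this subgraph yields $\sum_{c \in \mathcal{C} \cap \mathcal{S},\, v_i \notin V(c)} (-1)^{n(c)} c$, which coincides with $m_{ii}$ by the definition \eqref{gij} of $\mu_{ii}$.

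For the off-diagonal case $i \neq j$ I would expand
$$\operatorname{adj}(\id - \mathsf{W})_{ij} = \sum_{\sigma \in S_N,\ \sigma(j) = i} \sgn(\sigma) \prod_{k \neq j} (\id - \mathsf{W})_{k, \sigma(k)},$$
and further expand each diagonal factor $1 - \omega_{kk}$ at a fixed point $k \neq j$ of $\sigma$, producing a sum indexed by pairs $(\sigma, L)$, where $L$ records the fixed points of $\sigma$ (distinct from $j$) at which the $-\omega_{kk}$ branch is chosen. I would argue that $(\sigma, L) \mapsto h$ is a bijection onto $\mathcal{S}_{ij}$: the cycle of $\sigma$ through $j$, read as $j \mapsto i \mapsto \sigma(i) \mapsto \cdots$, produces the simple path from $v_i$ to $v_j$; the remaining non-trivial cycles of $\sigma$ give the simple cycles of length $\geq 2$; and $L$ provides the loops. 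Disjointness of the cycles of $\sigma$ forces the self-avoidance of $h$.

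The main obstacle is the sign bookkeeping. A direct count gives $c(\sigma) = n(h) + N - \vert V(h) \vert$ cycles, so $\sgn(\sigma) = (-1)^{\vert V(h) \vert - n(h)}$, while the accumulated minus signs from the $-\omega_{k,\sigma(k)}$ and $-\omega_{kk}$ factors contribute a further $(-1)^{\vert V(h) \vert - 1}$. Their product is $(-1)^{n(h) + 1} = \mu_{ij}(h)$, so summing over $h \in \mathcal{S}_{ij}$ recovers $\operatorname{adj}(\id - \mathsf{W})_{ij} = m_{ij}$, completing the identification.
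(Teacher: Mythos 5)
Your proposal is correct, and its overall skeleton matches the paper's: both reduce \eqref{main} to the single identity $\mathsf{M} = \adj(\id - \mathsf{W})$ via the Cauchy product and the adjugate identity $\adj(\mathsf{B}) = \det(\mathsf{B})\,\mathsf{B}^{-1}$, and both then argue entry-wise, with the diagonal case handled identically (the principal minor is $\chi(1)$ of the subgraph with $v_i$ deleted, whence $\sum_{c\,:\,v_i \notin V(c)} \mu(c)\, c$). Where you genuinely diverge is the off-diagonal case. The paper substitutes the values $\omega_{ji}=-1$, $\omega_{ki}=\omega_{jk}=0$ ($k \neq i,j$), $\omega_{ii}=\omega_{jj}=1$ into the already-established series $\det(\id-\mathsf{W})=\sum_{c}\mu(c)\,c$ and identifies the surviving terms with the Cauchy product $\sum_{(c,w)} \mu(c) f_{ij}(w)\, cw$, invoking the correspondence $h \in \mathcal H_{ij} \Leftrightarrow h\omega_{ji} \in \mathcal C$. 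You instead expand $\adj(\id-\mathsf{W})_{ij}$ directly as $\sum_{\sigma(j)=i} \sgn(\sigma) \prod_{k\neq j}(\id-\mathsf{W})_{k\sigma(k)}$, build the bijection $(\sigma, L) \leftrightarrow h \in \mathcal S_{ij}$, and do the sign count by hand; your bookkeeping checks out: $c(\sigma) = n(h) + N - \vert V(h)\vert$ cycles give $\sgn(\sigma) = (-1)^{\vert V(h)\vert - n(h)}$, the $\vert V(h)\vert - 1$ negated factors give $(-1)^{\vert V(h)\vert - 1}$, and the product is indeed $(-1)^{n(h)+1} = \mu_{ij}(h)$. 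Your route is more elementary and self-contained --- it amounts to proving the off-diagonal analogue of Ponstein's identity \eqref{psi} from scratch, and it avoids the paper's somewhat terse ``by identification with the right-hand side'' step --- at the cost of heavier combinatorial bookkeeping; the paper's route is shorter because it recycles the Mobius-function formula \eqref{mobdet} and the variable-substitution description of the adjugate.
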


\begin{proof}  The proof relies on the adjugate identity $\adj (\mathsf{B} ) = \det (\mathsf{B})  \mathsf{B}^{-1}$ applied to $\mathsf{B} = \id - \mathsf{W}$. Recall that for a matrix $\mathsf{B} = (b_{ij})_{i,j=1,...,N}$, the adjugate of $B$ is defined as
\begin{equation}\label{adj} \adj(\mathsf{B}) = \big( \det ( \mathsf{B}^{(ji)}) \big)_{i,j=1,...,N}, \end{equation}
where $\mathsf{B}^{(ji)}$ is the matrix obtained by setting $b_{ji}=1$, $b_{ki}=0$ for $k\neq j$ and $b_{jk} = 0$ for $k \neq i$ in $\mathsf{B}$. Combining Equation \eqref{mobdet} with the identity $\adj (\id - \mathsf{W} ) = \det (\id - \mathsf{W}) \ (\id - \mathsf{W} )^{-1}$ gives for $||| \mathsf{W} |||<1$
\begin{equation}\label{adj:normal} \adj (\id - \mathsf{W} ) = \sum_{k=0}^N \psi_k \times \sum_{k \geq 0} \mathsf{W}^k  = \sum_{\ell \geq 0} \sum_{k=0}^\ell \psi_k \mathsf{W}^{\ell -k}  \end{equation}
with $\psi_k=0$ for $k >N$. It remains to show that $\mathsf{M} = \adj ( \id - \mathsf{W})$. We proceed entry-wise, first considering the case $i=j$. By construction of the adjugate matrix in \eqref{adj} for $\mathsf{B}=\id -\mathsf{W} $, the conditions $b_{ii} =1$ and $b_{ki} =b_{ik} = 0$ for $k \neq i$ correspond to setting $\omega_{ki}= \omega_{ik}= 0$ for all $k$. Plugging these values into $\det (\id - \mathsf{W} ) = \sum_{c \in \mathcal C} \mu(c) c$ sets to zero every cycle crossing $v_i$, yielding
\begin{equation}\label{adj-ii} \big(\adj(\id-\mathsf{W}) \big)_{ii} 
= \sum_{\substack{c \in \mathcal C \\ i \notin V(c)}} \mu(c) c = \sum_{h \in \mathcal H} \mu_{ii}(h) h. \end{equation}
Now consider the case $i \neq j$. Going back to Equation \eqref{adj:normal}, we see that the $(i,j)$ entry of $\adj(\id - \mathsf{W})$ satisfies
$$ \big(\adj(\id - \mathsf{W})\big)_{ij} = \sum_{k=0}^N \psi_k \times \sum_{k \geq 0} \mathsf{W}^k_{ij} = \sum_{c \in \mathcal C} \mu(c) c \times \!\! \sum_{w \in \mathcal W} f_{ij}(w) w = \!\!\!\!\!\!   \sum_{(c,w) \in \mathcal C \times \mathcal W}  \!\!\!\!\!\! \mu(c) f_{ij}(w) c w.   $$
The right-hand side is a sum over hikes of the form $h = c w$ with $c$ a self-avoiding closed hike and $w$ a walk from $v_i$ to $v_j$. By \eqref{adj}, the left-hand side is obtained by plugging the values 
$\omega_{ji}=-1$, $\omega_{ki}=\omega_{jk} = 0$ for $k \neq i,j$ and $\omega_{ii} = \omega_{jj} = 1$ into $\det (\id - \mathsf{W}) = \sum_{c \in \mathcal C} \mu(c) c$. Since only hikes from $v_i$ to $v_j$ remain (by identification with the right-hand side), we deduce in view of \eqref{equiv1}, 
\begin{equation}\label{adj-ij} \big(\adj(\id-\mathsf{W}) \big)_{ij} = - \sum_{h \in \mathcal H_{ij}} \mu(h \omega_{ji}) h = \sum_{h \in \mathcal H} \mu_{ij}(h) h. \end{equation}
Thus, $\adj(\id - \mathsf{W}) = \mathsf{M}$. 
\end{proof}\\

The restriction $\mathsf{M}^{(\ell)} $ of $\mathsf{M}$ to hikes of length $\ell$ satisfies, by identifying the terms of equal degrees in Lemma \ref{lemma},
\begin{equation}\label{trace}  \mathsf{M}^{(\ell)}  =  \sum_{k=0}^{\ell} \psi_k \mathsf{W}^{\ell - k}= \bigg( \sum_{\substack{h \in \mathcal H^\ell}} \mu_{ij}(h) h \bigg)_{i,j=1,...,N}.   \end{equation}
In particular, $\mathsf{M}^{(\ell)}$ commutes with $\mathsf{W}$ for all $\ell \geq 0$. If the digraph contains few self-avoiding hikes of length $\ell$, the matrix $\mathsf{M}^{(\ell)}$ may have many zero entries. The construction gives in this case a non-trivial sparse matrix in the commutant of $\mathsf{W}$. This kind of problems has some applications in practice, for instance in the study of random processes. In \cite{fytp14}, the authors investigate conditions under which the transition kernel of a finite state Markov chain observed at random times can be estimated consistently. They show that sparsity conditions, arising from particular state transitions known to be impossible, suffice to recover the transition kernel when it commutes with a certain matrix for which an estimator is available. The identifiability conditions in this model rely on the existence of a sparse matrix in the commutant. Similar problems and applications are studied in \cite{inamura2006,MR0468086,MR668189}.\\

\noindent From the definition of $\mu_{ij}$, one verifies easily that $\widetilde{\mathsf{M}}^{(\ell)} := \psi_\ell \id - \mathsf{M}^{(\ell)} $ satisfies
\begin{equation}\label{mtilde} \widetilde{\mathsf{M}}^{(\ell)}_{ij} = - \sum_{k=0}^{\ell-1} \psi_k \mathsf{W}^{\ell - k}_{ij} = \sum_{\substack{h \in \mathcal S^\ell_{ij}}} (-1)^{n(h)}  h \end{equation}
for all $i,j =1,...,N$. This shows that the matrix enumerating, up to the coefficient $(-1)^{n(h)}$, the self-avoiding hikes of length $\ell$ for all pairs of vertices can be obtained as a polynomial of $\mathsf{W}$. Because $\mu_{ij}(h)$ is trivially zero when $\ell(h) = N$, one recovers Cayley-Hamilton's theorem by setting $\ell=N$ in Equation \eqref{trace}. The general case also gives a direct proof of the identity
\begin{equation}\label{tracebb} \psi_\ell = - \frac 1 \ell \sum_{k=0}^{\ell-1} \psi_k \tr \big(\mathsf{W}^{\ell-k} \big), \end{equation}
for which different proofs can be found in \cite{zadeh1976linear} and \cite{hong2008trace}. To prove it using Lemma \ref{lemma}, observe that \eqref{mtilde} yields
\begin{align} \tr \Big(\widetilde{\mathsf{M}}^{(\ell)}\Big) = \sum_{i=1}^N \sum_{\substack{c \in \mathcal C \cap \mathcal S^\ell_{ii}}} (-1)^{n(c)} c = \ell \sum_{c \in \mathcal C \cap \mathcal S^\ell} (-1)^{n(c)} c  = \ell \psi_\ell, \nonumber
\end{align}
noticing that each self-avoiding closed hike $c$ appears exactly $\ell = \ell(c)$ times when summing over $i$. Hence, Equation \eqref{tracebb} follows directly by computing the trace on both sides of the equality
$$ \widetilde{\mathsf{M}}^{(\ell)} = - \sum_{k=0}^{\ell-1} \psi_k \mathsf{W}^{\ell-k}.$$

\section{Incidence algebra on hikes}\label{sec:3}


Because $\mathcal C$ is stable by multiplication, it forms a monoid with the empty cycle $1$ as identity element. A natural partial order on $\mathcal C$ arises from division: $d \in \mathcal C$ divides $c$, denoted by $d | c$, if there exists $c' \in \mathcal C$ such that $c = d c'$. The closed hikes ordered by division form a locally finite partially ordered set, or \textit{poset}.

\begin{definition} The reduced incidence algebra of closed hike is the algebra of real valued functions on $\mathcal C$, endowed with the Dirichlet convolution, defined for $f,g : \mathcal C \to \mathbb R$ by
$$  f * g (c) = \sum_{d | c} f(d) g \Big( \frac c d \Big) \ , \ c \in \mathcal C.  $$
\end{definition} 

In this definition, the sum is taken over all divisors $d \in \mathcal C$ of $c$, including the trivial cycle $1$ and $c$ itself. The reduced incidence algebra is isomorphic to the algebra of formal series, endowed with multiplication. Indeed, for two functions $f,g: \mathcal C \to \mathbb R$
$$ \bigg( \sum_{c \in \mathcal C} f(c) c \bigg) \times \bigg( \sum_{c \in \mathcal C} g(c) c \bigg) = \sum_{c \in \mathcal C} f*g(c) c . $$
It follows that the Dirichlet convolution is associative, commutative and distributive over addition. The function $\delta$ defined on $\mathcal C$ by $\delta(1)=1$ and $\delta(c) =0 $ for all $c \neq 1$ is the identity element for this operation as we have, for any function $f$ on $\mathcal{C}$, $f*\delta = \delta*f = f$. We refer to \cite{rota1987foundations} for a more comprehensive study on this subject. \\

Extending this structure of open hikes is slightly more complicated. The set of hikes $\mathcal H$ is not stable by multiplication and for this reason, defining a division over $\mathcal H$ leads to some difficulties. However, hikes are stable by multiplication with a closed hike so that $(\mathcal H,.)$ forms an act over $(\mathcal C,.)$. Thus, the division by a closed hike can be extended to open hikes: for $h \in \mathcal H$, $d$ divides $h$ if $d \in \mathcal C$ and $h = dh'$ for some $h' \in \mathcal H$. Similarly, the Dirichlet convolution can be extended to $f : \mathcal C \to \mathbb R$ and $g: \mathcal H \to \mathbb R$ by
$$  f * g (h) = \sum_{d | h} f(d) g \Big( \frac h d \Big) \ , \ h \in \mathcal H.  $$
Here again, the sum is taken only over closed divisors of $h$. Remark that $f$ and $g$ cannot be permuted in this expression, unless $h \in \mathcal C$. Interesting combinatorial properties arise from the poset structure of hikes, by considering each hike individually in Equation \eqref{main}. The duality between walks and self-avoiding hikes becomes apparent in the next theorem. We extend the definition of $f_{ij}$ to $\mathcal H$ by setting $f_{ij}(h)=0$ whenever $h$ is not connected.

\begin{theorem}\label{th:1} For all $i,j=1,...,N$, $\mu_{ij} = \mu * f_{ij}$.
\end{theorem}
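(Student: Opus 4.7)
The plan is to deduce Theorem~\ref{th:1} from Lemma~\ref{lemma} by identifying coefficients in a formal-series factorisation, using the isomorphism between the reduced incidence algebra and the algebra of formal series in the variables $\omega_{ij}$ recorded just after the definition of the Dirichlet convolution.

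First, I would rewrite the content of Lemma~\ref{lemma} in its adjugate form. The proof of that lemma in fact establishes the intermediate identity $\mathsf{M} = \adj(\id-\mathsf{W})$, and combining this with $\adj(\id-\mathsf{W}) = \det(\id-\mathsf{W})(\id-\mathsf{W})^{-1}$ together with the formal series \eqref{mobdet} and \eqref{fij}, the $(i,j)$ entry factorises as
$$m_{ij} = \Big(\sum_{c \in \mathcal{C}} \mu(c)\, c\Big) \Big(\sum_{w \in \mathcal{W}} f_{ij}(w)\, w\Big).$$
Next, I would expand this product and regroup monomials by hike. For a fixed $h \in \mathcal{H}$, the pairs $(c,w) \in \mathcal{C} \times \mathcal{W}$ with $cw = h$ are in bijection with the closed divisors $d$ of $h$ for which the quotient $h/d$ is connected, via $d = c$ and $w = h/d$. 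Using the stated extension of $f_{ij}$ to $\mathcal{H}$ by $f_{ij}(h')=0$ whenever $h'$ is not a walk, the coefficient of $h$ on the right-hand side collapses to
$$\sum_{d \mid h} \mu(d)\, f_{ij}(h/d) = (\mu * f_{ij})(h).$$

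The theorem then follows by linear independence: since distinct hikes are distinct monomials in the variables $\omega_{ij}$, identifying coefficients of $h$ on both sides of
$$\sum_{h \in \mathcal{H}} \mu_{ij}(h)\, h \;=\; \sum_{h \in \mathcal{H}} (\mu * f_{ij})(h)\, h$$
yields $\mu_{ij}(h) = (\mu * f_{ij})(h)$ for every $h$. The only delicate step is the bookkeeping in the bijection between factorisations $h = cw$ and closed divisors with connected quotient; once the extension of $f_{ij}$ by zero off $\mathcal{W}$ is invoked to align the formal product with the Dirichlet convolution as defined for $f:\mathcal{C}\to\mathbb{R}$ and $g:\mathcal{H}\to\mathbb{R}$, the remainder is a purely formal re-collection of monomials rather than a separate combinatorial argument.
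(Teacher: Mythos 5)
Your proposal is correct and follows essentially the same route as the paper: both deduce the identity from Lemma~\ref{lemma} via $\mathsf{M} = \det(\id-\mathsf{W})\,(\id-\mathsf{W})^{-1}$, multiply the formal series \eqref{mobdet} and \eqref{fij}, and conclude by identifying the coefficient of each hike, with the extension of $f_{ij}$ by zero off $\mathcal W$ turning the formal product into the Dirichlet convolution. The bookkeeping you spell out for the correspondence between factorisations $h=cw$ and closed divisors with connected quotient is precisely the ``identification'' step the paper leaves implicit.
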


\begin{proof} From Lemma \ref{lemma}, we know that whenever $ ||| \mathsf{W} ||| <1$,
$$ \mathsf{M} = \det (\id - \mathsf{W}) \ (\id - \mathsf{W} )^{-1}.$$ 
From \eqref{fij} and \eqref{gij} and using that $\det (\id - \mathsf{W}) = \sum_{c \in \mathcal C} \mu(c) c$, we obtain for all $i,j=1,...,N$,
$$ \sum_{h \in \mathcal H} \mu_{ij}(h) h  = \sum_{c \in \mathcal C} \mu(c) c  \times \sum_{h \in \mathcal H} f_{ij}(h) h = \sum_{h \in \mathcal H} \mu*f_{ij}(h) \ h.  $$
The result follows by identification.
\end{proof}

This theorem reveals a somewhat unexpected relation between the function $\mu_{ij}$ which only takes non zero values for self-avoiding hikes and $f_{ij}$ whose support contains only walks. Taking a closer look, the result is not surprising if $h$ is self-avoiding. Indeed, a self-avoiding hike $h$ from $v_i$ to $v_j$ has exactly one divisor $d \in \mathcal C$ such that $h/d$ is a walk from $v_i$ to $v_j$. Thus, the convolution $\mu * f_{ij}$ is calculated over only one non-zero element and the equality is easily verified in this case. The result is actually more interesting if $h$ is not self-avoiding as it yields in this case the non-trivial identity
$$ \forall h \in \mathcal H \setminus \mathcal S \ , \ \sum_{\substack{d | h}} \mu(d) f_{ij}\Big(\frac{h}{d}\Big) =0. $$

Clearly, the function $\mu$ is a key feature to understand the combinatorial properties of this poset. The fact that $\mu(1) = 1 \neq 0$ makes it invertible through the Dirichlet convolution, and its inverse $\beta: \mathcal C \to \mathbb R$ is the unique function characterized by $\mu * \beta = \beta * \mu = \delta$. A reversed relation, expressing $f_{ij}$ in function of $\mu_{ij}$ can then be derived easily, noticing that
$$ f_{ij} = (\beta * \mu) * f_{ij} = \beta *(\mu * f_{ij} ) = \beta * \mu_{ij}.    $$ 
This relation turns out to be particularly important for our purposes, as we show that $\beta$ satisfies interesting properties. In particular, we establish in the next proposition an expression of $\beta(c)$ that involves the number of appearances of each edge and vertex in $c$. Let $\tau_{ij}(c)$ denote the multiplicity of $\omega_{ij}$ in $c$ and $\tau_{i}(c) = \sum_{j=1}^N \tau_{ij}(c)$ the number of edges starting from $v_i$ in $c$ (counted with multiplicity).

\begin{theorem}\label{betaexpl} The function $\beta$ satisfies for all $c \in \mathcal C$, 
$$ \beta(c) = \prod_{i =1}^N \frac{\tau_{i}(c)!}{\tau_{i1}(c)! \times ... \times \tau_{iN}(c)!}.  $$

\end{theorem}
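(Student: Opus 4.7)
The plan is to identify $\beta$ with the coefficient sequence of the formal power series $1/\det(\id - \mathsf{W})$ and to expand that series explicitly so that the claimed formula appears. Since $\beta$ is by definition the Dirichlet convolution inverse of $\mu$, the identity $\mu * \beta = \delta$ translates, under the isomorphism between the reduced incidence algebra of $\mathcal{C}$ and the algebra of formal series in the $\omega_{ij}$, into
$$\bigg(\sum_{c \in \mathcal{C}} \mu(c) c\bigg) \bigg(\sum_{c \in \mathcal{C}} \beta(c) c\bigg) = 1,$$
so that by \eqref{mobdet} one has $\sum_c \beta(c) c = 1/\det(\id - \mathsf{W})$. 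Denoting by $T(c)$ the multinomial product on the right-hand side of the theorem, it thus suffices to prove the formal series identity $\sum_c T(c) c = 1/\det(\id - \mathsf{W})$.

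To evaluate $1/\det(\id - \mathsf{W})$, I would invoke the MacMahon master theorem, which applied to $\mathsf{W}$ yields
$$\frac{1}{\det(\id - \mathsf{W})} = \sum_{(\tau_{ij})} \prod_{i=1}^N \frac{\big(\sum_j \tau_{ij}\big)!}{\prod_j \tau_{ij}!} \prod_{i,j} \omega_{ij}^{\tau_{ij}},$$
the sum running over nonnegative integer matrices $(\tau_{ij})$ satisfying $\sum_j \tau_{ij} = \sum_j \tau_{ji}$ for each $i$. The key step is then to identify such balanced matrices with closed hikes viewed as monomials in the $\omega_{ij}$: the balancing condition is exactly the directed Eulerian condition on the multigraph with $\tau_{ij}$ copies of the edge $v_i \to v_j$, so the standard decomposition of Eulerian directed multigraphs into edge-disjoint simple cycles shows that every balanced matrix arises as the edge-multiplicity array of some product of simple cycles, i.e. of some $c \in \mathcal{C}$, while conversely every $c \in \mathcal{C}$ produces a balanced matrix $(\tau_{ij}(c))$. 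Since the monomial $\prod \omega_{ij}^{\tau_{ij}(c)}$ recovers $c$, the correspondence is a bijection and the sum above equals $\sum_{c \in \mathcal{C}} T(c) c$. Identifying coefficients of each monomial then yields $\beta(c) = T(c)$, as claimed.

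The main obstacle is the invocation of the MacMahon master theorem: one can either cite it as a classical identity, in which case the remaining work is only the bijection between balanced matrices and closed hikes (a directed Euler argument), or else produce a self-contained verification of $\mu * T = \delta$ by a sign-reversing involution on pairs $(d,A)$ with $d$ a self-avoiding divisor of $c$ and $A$ an ordering of the out-edges of $c/d$ at each vertex. Such an involution can be defined by following the prescribed orderings from a canonical starting vertex until a first simple cycle is produced and then toggling that cycle into or out of $d$, but the boundary cases (in particular, what it means to follow $A$ from a vertex already used by $d$) demand careful bookkeeping, and this is where the bulk of the work would lie if one chose to avoid the master theorem.
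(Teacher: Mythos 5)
Your proposal is correct, but it takes a genuinely different route from the paper. The paper's proof verifies directly that the candidate $T(c)=\prod_{i}\tau_i(c)!/\big(\tau_{i1}(c)!\cdots\tau_{iN}(c)!\big)$ satisfies $\mu*T=\delta$: for each nonempty self-avoiding divisor $d$ of $c$ it computes the ratio $T(c/d)/T(c)=\prod_{v_i\in V(d)}\tau_{i\sigma_d(i)}(c)/\tau_i(c)$, recognizes the resulting alternating sum over self-avoiding divisors as $\det\big(\id-\mathsf{B}(c)\big)$ for the row-stochastic matrix $\mathsf{B}(c)=\big(\tau_{ij}(c)/\tau_i(c)\big)$, and concludes because a stochastic matrix has eigenvalue $1$. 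You instead work at the level of generating functions: from $\mu*\beta=\delta$ and \eqref{mobdet} you get $\sum_c\beta(c)\,c=1/\det(\id-\mathsf{W})$, expand the right-hand side by MacMahon's master theorem, and match balanced nonnegative integer matrices with closed hikes. Both steps are sound: the master theorem in the form you quote does yield the multinomial sum over matrices with equal row and column sums (terms with $\tau_{ij}>0$ on a non-edge vanish automatically), and the correspondence with $\mathcal C$ is a bijection because the degree-balance condition alone (no connectivity required) guarantees an edge-disjoint decomposition into simple cycles, while distinct closed hikes are distinct monomials. What you lose is self-containedness: the combinatorial content is outsourced to MacMahon, and the paper's stochastic-matrix computation is in effect a short self-contained proof of precisely the instance of the master theorem you invoke. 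What you gain is brevity and a conceptual identification of Theorem~\ref{betaexpl} as the master theorem read through the hike--monomial dictionary, which also makes the interpretation of $\beta(c)$ as the number of orderings of the out-edges at each vertex immediate; your fallback sign-reversing involution would only be needed if one insisted on avoiding that citation.
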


\begin{proof} We will prove that the function $\beta$ as defined in the theorem is the inverse of $\mu$ through the Dirichlet convolution. The case $c=1$ being trivial, we take $c \neq 1$. Since $\mu(d) =0$ if $d$ is not self-avoiding, we have
$$ \mu * \beta (c) = \sum_{d | c} \mu(d) \beta \Big( \frac c d \Big)  = \sum_{\substack{d | c \\ d \in \mathcal C \cap \mathcal S}} \mu(d) \beta \Big( \frac c d \Big).  $$
For $d \neq 1$ a self-avoiding divisor of $c$, denote by $\sigma_d$ the permutation over $V(d)$ associated to $d$, i.e. such that $ d = \prod_{v_i \in V(d)} \omega_{i \sigma_d(i)}$. Clearly, $\tau_{ij}(c/d) = \tau_{ij}(c)$ if $v_i \notin V(d)$, while for $v_i \in V(d)$, we have $\tau_i(c/d) = \tau_i(c) -1$ and
$$ \tau_{ij} \Big( \frac c d \Big) = \left\{ \begin{array}{cl} \tau_{ij}(c) - 1 & \text{ if } j= \sigma_d(i) \\  \tau_{ij}(c) & \text{ otherwise.} \end{array} \right. $$ 
It follows
\begin{eqnarray*} \beta \Big( \frac c d \Big) = \prod_{i=1}^N \frac{\tau_{i}(c/d)!}{\tau_{i1}(c/d)! \times ... \times \tau_{iN}(c/d)!} = \prod_{v_i \in V(d)} \frac{\tau_{i \sigma_d(i)}(c)}{\tau_{i}(c)} \times \beta (c).
\end{eqnarray*}
We get, including the case $d=1$
$$ \sum_{d | c} \mu(d) \beta \Big( \frac c d \Big) = \beta(c) \bigg( 1 + \sum_{\substack{d | c \\ d \in \mathcal C \cap \mathcal S \setminus \{ 1 \}} } \mu(d) \prod_{v_i \in V(d)} \frac{\tau_{i \sigma_d(i)}(c)}{\tau_{i}(c)} \bigg).  $$
Recall that for $d \in \mathcal C \cap \mathcal S^k$, $\mu(d) = (-1)^k \sgn(\sigma_d)$. Let $\ell = \vert V(c) \vert$ (the number of different vertices in $c$), the previous equality becomes, regrouping the divisors with equal lengths,
$$ \sum_{d | c} \mu(d) \beta \Big( \frac c d \Big) = \beta(c) \bigg( 1 +  \sum_{k=1}^{\ell} (-1)^k \sum_{\substack{d | c \\ d \in \mathcal C \cap \mathcal S^k}} \sgn(\sigma_d)  \prod_{v_i \in V(d)}  \frac{\tau_{i \sigma_d(i)}(c)}{\tau_{i}(c)} \bigg).  $$
Now consider the $\ell \times \ell$ matrix $\mathsf{B}(c)$ with entries $\tau_{ij}(c)/\tau_{i}(c)$ for $v_i,v_j \in V(c)$. By identifying each self-avoiding divisor $d$ of $c$ with its corresponding permutation $\sigma_d$, we recognize in the above expression the characteristic polynomial of $\mathsf{B}(c)$ taken at $\lambda =1$,
 $$ 1+ \sum_{k=1}^{\ell} (-1)^k \sum_{\substack{d | c \\ d \in \mathcal C \cap \mathcal S^k}} \sgn(\sigma_d)  \prod_{i \in d}  \frac{\tau_{i \sigma_d(i)}(c)}{\tau_{i}(c)} = \det( \id - \mathsf{B}(c)). $$
Since $\mathsf{B}(c)$ is a stochastic matrix, $\det( \id - \mathsf{B}(c)) =0$ which ends the proof.
\end{proof}

The coefficient $\beta(c)$ corresponds to the number of arrangements of the edges in $c$, regrouped by their initiating vertex. Indeed, the multinomial coefficient
$$ \frac{\tau_{i}(c)!}{\tau_{i1}(c)! \times ... \times \tau_{iN}(c)!} $$
counts the ways of ordering the edges initiating from $v_i$ in $c$, accounting for their multiplicity $\tau_{ij}(c)$. Considering all configurations for each vertex in $c$ recovers the coefficient $\beta(c)$. So, $\beta$ enumerates the different ways to travel along a closed hike. \\

This result points out some interesting properties of $\beta$, most of which are not straightforward from its initial definition as the inverse of $\mu$. The first immediate consequence is that $\beta$ is positive. Secondly, $\beta(c)$ is equal to one if $c$ is a self-avoiding closed hike. This condition is sufficient but not necessary, as we have for instance $\beta(c^2) =1$ as soon as $\beta(c)=1$. A third consequence is that $\beta$ is non-decreasing with respect to multiplication, which can be stated formally as: $\forall c_1, c_2 \in \mathcal C$, $\beta(c_1 c_2) \geq \max \{ \beta(c_1), \beta(c_2) \}$. Finally, $\beta$ is multiplicative over decompositions on disjoint closed hikes. Indeed, if $c$ can be written as the product of say $p \geq 2$ mutually vertex-disjoint components $c_1,...,c_p \in \mathcal C$, then $\beta(c) = \beta(c_1) ... \beta(c_p)$. This property is reminiscent of the multiplicity of arithmetic functions over coprime integers (see for instance \cite{apostol1990modular}). In this framework, two closed hikes $c_1,c_2$ can be considered coprime if they share no common vertex. The multiplicity property of $\beta$ is then inherited from the multiplicity of its inverse $\mu$.  \\

\begin{remark} The function $\mu$ is the Mobius function (i.e. the inverse of the constant function equal to $1$) of the trace monoid of circuits described in \cite{cartier1969}. Circuits correspond to partially commutative versions of closed hike so that, in our fully commutative framework, circuits composed of the same edges are seen as the same object. Thus, $\beta(c)$ counts the number of circuits composed of the same edges as $c$. 
\end{remark}

A different expression for $\beta$ can be derived from the inverse relation in Lemma \ref{lemma}, writing $\mathsf{W}^\ell$ in terms of the $\mathsf{M}^{(k)},k=0,1,...,\ell$. This result is given as a corollary. 
\begin{corollary}\label{inv} If $||| \mathsf{W} ||| <1$, then for $\ell \in \mathbb N$,
\begin{equation}\label{coro} \mathsf{W}^{\ell} = \sum_{k=0}^{\ell} \phi_k \mathsf{M}^{(\ell-k)}, \end{equation}
where the coefficients $\phi_0$, $\phi_1$,...,$\phi_N$ are defined by
\begin{equation}\label{phi} \phi_0= 1 \ , \ \phi_{k} = \sum_{k_1+...+k_p=k} (-1)^p \psi_{k_1} ... \psi_{k_p}, \ k = 1,...,N. \end{equation}
\end{corollary}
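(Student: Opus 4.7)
The plan is to invert the identity supplied by Lemma \ref{lemma} and read off homogeneous components. By that lemma we have, under $|||\mathsf{W}|||<1$,
$$\mathsf{M}=\det(\id-\mathsf{W})\,(\id-\mathsf{W})^{-1},\qquad\text{so}\qquad (\id-\mathsf{W})^{-1}=\frac{1}{\det(\id-\mathsf{W})}\,\mathsf{M}.$$
Since $\det(\id-\mathsf{W})=\sum_{k=0}^N\psi_k=1+\sum_{k\ge 1}\psi_k$ and each $\psi_k$ is a homogeneous polynomial of degree $k$ in the $\omega_{ij}$'s (with $\psi_k=0$ for $k>N$), the scalar formal series $1/\det(\id-\mathsf{W})$ is a well-defined unit in the ring of formal series in the $\omega_{ij}$.

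The first key step is to expand this inverse as a geometric series,
$$\frac{1}{\det(\id-\mathsf{W})}=\sum_{p\ge 0}(-1)^p\Big(\sum_{k\ge 1}\psi_k\Big)^{p},$$
and regroup by total degree. The degree-$k$ homogeneous part of the right-hand side is precisely
$$\sum_{p\ge 0}(-1)^{p}\sum_{k_1+\dots+k_p=k}\psi_{k_1}\cdots\psi_{k_p}=\phi_k,$$
where for $k=0$ only the empty product ($p=0$) contributes and gives $\phi_0=1$, and for $k\ge 1$ only finitely many $p\le k$ are possible (and only partitions with $k_i\le N$ contribute since $\psi_{k_i}=0$ otherwise). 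So $1/\det(\id-\mathsf{W})=\sum_{k\ge 0}\phi_k$, with the $\phi_k$ of \eqref{phi}.

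The second key step is to perform the product with $\mathsf{M}=\sum_{m\ge 0}\mathsf{M}^{(m)}$, where $\mathsf{M}^{(m)}$ is the homogeneous part of degree $m$ identified in \eqref{trace}. Multiplying the two series and grouping by total degree gives
$$(\id-\mathsf{W})^{-1}=\Big(\sum_{k\ge 0}\phi_k\Big)\Big(\sum_{m\ge 0}\mathsf{M}^{(m)}\Big)=\sum_{\ell\ge 0}\sum_{k=0}^{\ell}\phi_k\,\mathsf{M}^{(\ell-k)}.$$
On the other hand $(\id-\mathsf{W})^{-1}=\sum_{\ell\ge 0}\mathsf{W}^{\ell}$, and the term $\mathsf{W}^\ell$ is homogeneous of degree $\ell$. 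Identifying the degree-$\ell$ components on both sides yields \eqref{coro}.

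There is no real obstacle; the only subtlety is bookkeeping of homogeneous degrees (ensuring that $\phi_k$ genuinely is the degree-$k$ part of $1/\det(\id-\mathsf{W})$ and that $\mathsf{M}^{(\ell-k)}$ is the degree-$(\ell-k)$ part of $\mathsf{M}$), which is why the argument is essentially a formal-series identification once Lemma \ref{lemma} has been invoked.
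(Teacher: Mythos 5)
Your proof is correct and follows essentially the same route as the paper: invoke Lemma \ref{lemma}, expand $1/\det(\id-\mathsf{W})$ as a geometric series in $\sum_{k\ge 1}\psi_k$, regroup by homogeneous degree to identify $\sum_{k\ge 0}\phi_k$, multiply by $\mathsf{M}=\sum_m \mathsf{M}^{(m)}$, and identify degree-$\ell$ components against $\sum_\ell \mathsf{W}^\ell$. The only cosmetic difference is that the paper disposes of the degenerate case $\mathsf{W}=0$ separately before dividing, which your formal-series framing renders unnecessary.
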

Before proving the result, let us clarify that the $\phi_k$'s are defined by taking the sum over all compositions of $k$, that is, all positive tuples $(k_1,...,k_p)$ such that $k_1+...+k_p=k$, for all $p=1,...,k$ (two tuples composed of the same integers $k_1,...,k_p$ but in different orders are to be counted twice). \\

\begin{proof} This holds if $\mathsf{W}$ is the null matrix with the convention $\mathsf{W}^0=\id$ in this case. For $\mathsf{W} \neq 0$, Equation \eqref{adj:normal} yields
$$ \mathsf{M} = \sum_{\ell \geq 0} \mathsf{M}^{(\ell)} =  \sum_{k =0}^N \psi_k \times \sum_{k \geq 0} \mathsf{W}^k \ \Longleftrightarrow \  \sum_{\ell \geq 0} \mathsf{W}^\ell = \frac 1 {\sum_{k =0}^N \psi_k } \ \sum_{k \geq 0} \mathsf{M}^{(k)}.$$
We use the formal series expansion
$$ \frac 1 {\sum_{k =0}^N \psi_k } = \frac 1 {1+ \sum_{k =1}^N \psi_k } = \sum_{p\geq 0} (-1)^p \Big( \textstyle\sum_{k =1}^N \psi_k \Big)^p.  $$
By regrouping the terms of equal degree, we get
\begin{equation}\label{inv-formal} \frac 1 {\sum_{k =0}^N \psi_k } = 1+ \sum_{k \geq 1} \ \  \sum_{ k_1+...+k_p=k} (-1)^p \psi_{k_1} ... \psi_{k_p} = \sum_{k \geq 0} \phi_k.  \end{equation}
Hence,
$$  \sum_{\ell \geq 0} \mathsf{W}^\ell = \sum_{k \geq 0} \phi_k  \times \sum_{k \geq 0} \mathsf{M}^{(k)} = \sum_{\ell \geq 0} \sum_{k =0}^\ell  \phi_k \mathsf{M}^{(\ell -k)}, $$
and the result follows by identification.
\end{proof}

Like the $\psi_k$'s, the coefficients $\phi_k$ are homogenous polynomials of degree $k$ in the $\omega_{ij}$'s. While $\psi_k$ only involves the self-avoiding closed hikes, $\phi_k$ depends on all the closed hikes of length $k$ on the digraph. The formal series inversion in Equation \eqref{inv-formal} actually corresponds to the inversion of the Dirichlet convolution when identifying each closed hike. This means in particular that the coefficient $\phi_k$ can be expressed as
\begin{equation}\label{beta} \phi_k = \sum_{ c \in \mathcal C^k} \beta(c) c. \end{equation}
One can verify this formula directly from the formal series multiplication
$$ 1 =  \sum_{k \geq 0} \phi_k \times \sum_{k \geq 0} \psi_k  = \sum_{c \in \mathcal C} \beta(c) c \times \sum_{ c \in \mathcal C} \mu(c) c = \sum_{c \in \mathcal C} \beta * \mu (c) c  $$
recovering exactly the formal series version of the equality $\delta = \beta * \mu$. By combining Equations \eqref{phi} and \eqref{beta}, we deduce a new expression of $\beta(c)$ for $c \neq 1$:
\begin{align}\label{beta2} \beta (c) = \sum_{p \geq 1} \ \  \sum_{c_1 ... c_p = c} (-1)^p \mu(c_1) ... \mu(c_p)  = \sum_{s_1 ... s_p = c} (-1)^{n'(s_1) + ... n'(s_p)} \end{align}
setting $n'(.) = n(.) + 1$, where the final sum is taken over all $p$-tuples $(s_1,...,s_p)$ of non-empty self-avoiding closed hikes such that $s_1...s_p = c$. This equality provides an expression of $\beta(c)$ involving the different decompositions of $c$ into self-avoiding closed hikes. While this expression is presumably less practical than the previous one, it induces nevertheless interesting consequences from a combinatorial point of view, which are discussed in Section \ref{sec:4}. \\

We now come to our final result, which expresses the multiplicity of an open walk in terms of its decompositions into self-avoiding components. This result will be illustrated on some examples in Section \ref{sec:4}.

\begin{theorem}\label{walk-covering} Let $h$ be a non-empty hike from $v_i$ to $v_j$,  
$$ f_{ij}(h) = \sum_{\substack{s_1 ... s_k p=h}} (-1)^{n'(s_1)+...+n'(s_k) + n'(p)} $$ 
setting $n'(.)=n(.)+1$, where the sum is taken over all self-avoiding decompositions of $h$, i.e., all $(k+1)$-tuples $(s_1,...,s_k,p) \in (\mathcal C \cap \mathcal S \setminus \{ 1 \})^k \times \mathcal S_{ij}$ with $ k \geq 0$ such that $ s_1 ... s_k p = h$.
\end{theorem}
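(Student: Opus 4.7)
My plan is to show that the right-hand side of the claim is the coefficient of $h$ in $(\id - \mathsf W)^{-1}_{ij}$, which by \eqref{fij} equals $f_{ij}(h)$ on non-empty hikes (and is $0$ when $h$ is not connected, matching the extension of $f_{ij}$). To do so I would introduce the two formal series
$$
T \;=\; \sum_{s \in \mathcal C \cap \mathcal S \setminus \{1\}} (-1)^{n'(s)}\, s
\quad\text{and}\quad
P_{ij} \;=\; \sum_{p \in \mathcal S_{ij}\setminus\{1\}} (-1)^{n'(p)}\, p
$$
(the refinement $\mathcal S_{ij}\setminus\{1\}$ only matters when $i=j$; I read the $\mathcal S_{ij}$ of the theorem in this non-trivial sense). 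Expanding $T^k$ as an enumeration of ordered $k$-tuples in $(\mathcal C \cap \mathcal S \setminus \{1\})^k$ weighted by $(-1)^{\sum n'(s_\ell)}$ shows that the coefficient of $h$ in $P_{ij}\cdot\sum_{k\geq 0} T^k$ is exactly the claim's right-hand side.

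The next step is to identify these two series with quantities already computed in the paper. Since $\mu(s)=(-1)^{n(s)}$ on $\mathcal C \cap \mathcal S \setminus \{1\}$, one has $(-1)^{n'(s)} = -\mu(s)$, so $T = 1 - \det(\id - \mathsf W)$ and hence $\sum_{k \geq 0} T^k = (\det(\id - \mathsf W))^{-1}$. For $i \neq j$ the support of $\mu_{ij}$ is $\mathcal S_{ij}$ with $\mu_{ij}(p) = (-1)^{n(p)+1} = (-1)^{n'(p)}$, so $P_{ij} = \mathsf M_{ij}$; Lemma \ref{lemma} then gives $P_{ij}\cdot(\det(\id - \mathsf W))^{-1} = (\id - \mathsf W)^{-1}_{ij}$ directly, settling this case.

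The delicate case is $i = j$. The support of $\mu_{ii}$ is $\{1\}\cup\{c \in \mathcal C \cap \mathcal S \setminus \{1\} : v_i \notin V(c)\}$, which is \emph{complementary} to $\mathcal S_{ii}\setminus\{1\} = \{p \in \mathcal C \cap \mathcal S \setminus \{1\} : v_i \in V(p)\}$. Splitting $\det(\id - \mathsf W) = \sum_{c \in \mathcal C} \mu(c)\,c$ according to whether $v_i \in V(c)$ yields the identity $P_{ii} = \mathsf M_{ii} - \det(\id - \mathsf W)$, from which
$$
P_{ii}\cdot(\det(\id - \mathsf W))^{-1} \;=\; (\id - \mathsf W)^{-1}_{ii} - 1.
$$
The coefficient of any non-empty hike $h$ in the right-hand side is still $f_{ii}(h)$ (the $-1$ only kills the constant $h=1$ term), which concludes the argument.

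The main obstacle is precisely this identification $P_{ii} = \mathsf M_{ii} - \det(\id - \mathsf W)$: it encodes the indicator swap $\mathbf{1}[v_i \in V(p)] = 1 - \mathbf{1}[v_i \notin V(p)]$ between $\mathcal S_{ii}\setminus\{1\}$ and the support of $\mu_{ii}$. Everything else is formal series manipulation and a straightforward sign computation based on $n' = n + 1$.
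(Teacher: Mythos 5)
Your argument is correct and is essentially the paper's own proof translated into generating-series language: your identity $\sum_{k\geq 0}T^k=(\det(\id-\mathsf W))^{-1}=\sum_{c\in\mathcal C}\beta(c)c$ is Equation \eqref{beta2}, your use of Lemma \ref{lemma} to get $P_{ij}\cdot(\det(\id-\mathsf W))^{-1}=(\id-\mathsf W)^{-1}_{ij}$ is the convolution identity $f_{ij}=\beta*\mu_{ij}$, and your splitting $P_{ii}=\mathsf M_{ii}-\det(\id-\mathsf W)$ is exactly the paper's indicator swap $\mu_{ii}(c)=\mu(c)-\mu(c)\mathds 1\{v_i\in V(c)\}$ in the diagonal case. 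No gaps; the reading of $\mathcal S_{ii}$ as the nontrivial self-avoiding closed hikes through $v_i$ is the intended one.
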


\begin{proof} 
Plug the expression of $\beta$ in Equation \eqref{beta2} into $f_{ij}(h) = \sum_{d | h} \beta(d) \mu_{ij}(h/d)$.
For $i\neq j$, the fact that $\mu_{ij}(h/d) = 0$ for $h/d \notin \mathcal S_{ij}$ simplifies into
\begin{eqnarray*} f_{ij}(h) = \sum_{\substack{d=s_1 ... s_k | h \\ h/d \in \mathcal S_{ij}} } (-1)^{n'(s_1) + ... + n'(s_k)}  \times  (-1)^{n'(h/d)} = \!\!\!\! \sum_{s_1 ... s_k p =h} (-1)^{n'(s_1) + ... + n'(s_k)+ n'(p)} \end{eqnarray*}
for $p=h/d$, thus recovering the result. For $i = j$, we use that $ \mu_{ii}(c) = \mu(c)\mathds 1 \{ v_i \notin V(c) \}$,
\begin{eqnarray*}  f_{ii}(h) = \sum_{\substack{d | h} } \beta(d) \mu_{ii}\Big( \frac h d \Big) = \sum_{\substack{d | h} }\beta(d) \mu \Big( \frac h d \Big)  - \sum_{\substack{d | h \\ v_i \in V(h/d)} } \beta(d) \mu\Big( \frac h d \Big). 
\end{eqnarray*}
The first term of the right-hand side is $-\beta* \mu(h)$ which is zero for all $h \neq 1$. The result follows by using the expression of $\beta$ given in Equation \eqref{beta2}, similarly as in the case $i \neq j$.
\end{proof}

\section{Examples}\label{sec:4}

In this section, the functions $f_{ij}, \mu_{ij}, \beta$ and $\mu$ are computed on some examples. For ease of comprehension, we start with explicit simple cases before considering more general structures in the final examples.\\

\noindent \textit{Example 1.} 
Let us begin with the graph represented in Figure~\ref{fig:2} which contains only two disjoint simple cycles.\vspace{-0.8cm}

\begin{figure}[H]
	\centering
		\includegraphics[width=0.46\textwidth]{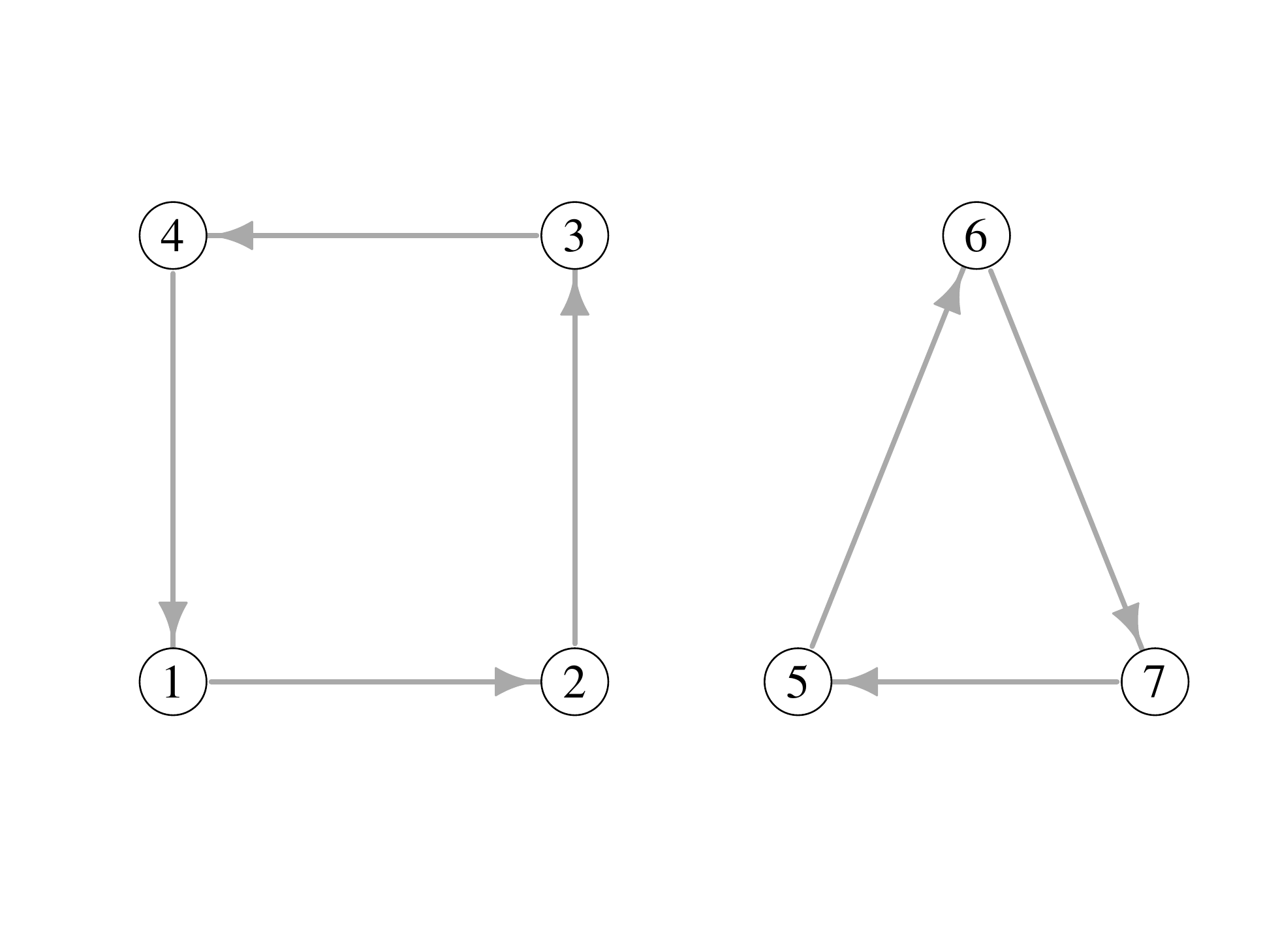}
\vspace{-1.6cm}
	\caption{\small{Disjoint simple cycles.}}
  \label{fig:2}
\end{figure}

\noindent The hike covering the whole graph is the closed hike $h_1 =\omega_{12}\omega_{23}\omega_{34}\omega_{41}\omega_{56}\omega_{67}\omega_{75} $ (recall that the order is not important). We obtain directly $f_{ij}(h_1)=0$ (because $h_1$ is not connected) and $\mu_{ij}(h_1) = 0$ (because $h_1$ crosses every vertex) for all $i,j =1,...,7$. Moreover, the definitions of $\mu$ and $\beta$ give in this case $\mu(h_1) = \beta(h_1) = 1$. To check the equalities $\mu*f_{ii} = \mu_{ii}$ and $\beta*\mu_{ii} = f_{ii}$, the calculations are straightforward, since the only closed divisors of $h_1$ are $h_1, c_1=\omega_{12}\omega_{23}\omega_{34}\omega_{41}, c_2=\omega_{56}\omega_{67}\omega_{75}$ and the void cycle $1$. We get for instance,
$$\mu*f_{11}(h_1) = \mu(h_1) f_{11}(1) + \mu(c_{2})f_{11}(c_1) =  0 =  \mu_{11}(h_1),$$
using that $f_{11}(c_1) = 1$, $f_{11}(c_2)=0$ and $\mu(c_2) = -1$. From $\mu_{11}(c_1) = 0$, we also verify
 $$  \beta*\mu_{11}(h_1) = \beta(h_1)\mu_{11}(1)+ \beta(c_1)\mu_{11}(c_2)=  0 = f_{11}(h_1). $$

 \noindent \textit{Example 2.}  We now consider the graph given in Figure~\ref{fig:3}, composed of two simple cycles sharing one vertex and the covering hike $h_2 = \omega_{12}\omega_{23}\omega_{31}\omega_{24}\omega_{45}\omega_{52} $. \vspace{-0.4cm}

\begin{figure}[H]
	\centering
		\includegraphics[width=0.35\textwidth]{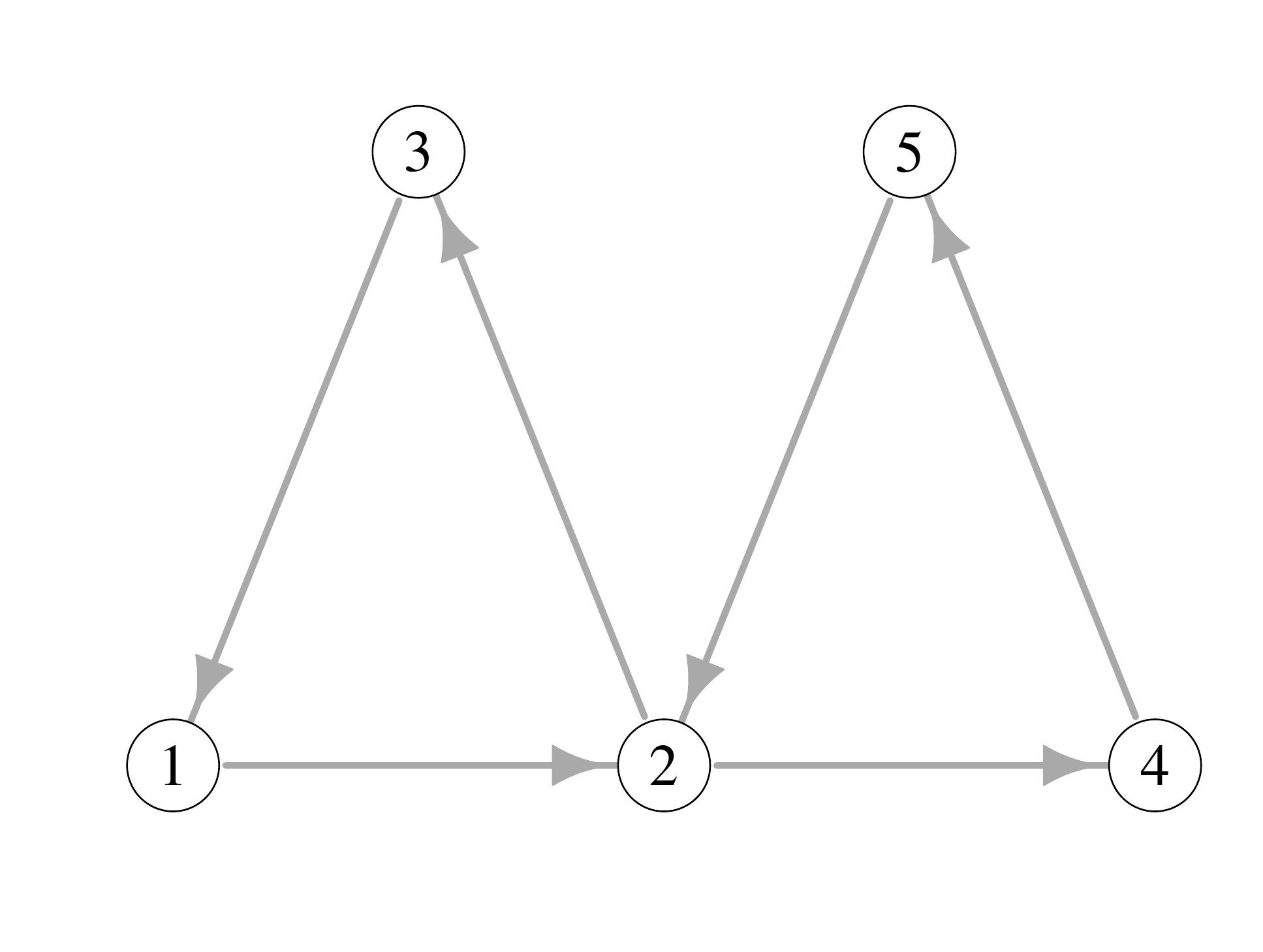}
		\vspace{-0.8cm}
  \caption{\small{Two simple cycles with one common vertex.}}
	\label{fig:3}
\end{figure}

\noindent Since $h_2$ is closed, $f_{ij}(h_2)$ is null for all $i \neq j$. Moreover, there are two ways to travel across $h_2$ starting from $v_2$, depending on which side is visited first, and one way for every other vertex. We deduce $f_{22}(h_2)=2 $ and $f_{ii}(h_2)=1$ for $i=1,3,4,5$. Since $h_2$ is not self-avoiding $\mu(h_2)=0$ and Theorem~\ref{betaexpl} gives $\beta(h_2) = 2$. To check the formulas, we now consider all the decompositions of $h_2$ into a product of closed hikes. The two non-trivial divisors of $h_2$ are $c_1=\omega_{12}\omega_{23}\omega_{31}$ and $c_2=\omega_{24}\omega_{45}\omega_{51}$. 
 %
We verify for instance,
   \begin{align*}
 \mu*f_{22}(h_2) =&~ \mu(1)f_{22}(h_2)+\mu(c_{2})f_{22}(c_1)+\mu(c_{1})f_{22}(c_2)=   0 = \mu_{22}(h_2)
 \\  \beta*\mu_{11}(h_2) =&~ \beta(h_2)\mu_{11}(1)+ \beta(c_1)\mu_{11}(c_2)=   1 = f_{11}(h_2)
 \end{align*}

\noindent \textit{Example 3.}  This example deals with the walk $h_3 = \omega_{12}\omega_{23}\omega_{34}\omega_{41}\omega_{62}\omega_{25}\omega_{54}\omega_{46}$ composed of two simple cycles sharing two vertices, represented in Figure~\ref{fig:4}. The non-trivial divisors of $h_3$ are detailed in Figure~\ref{fig:4b}.   \\

\vspace{-1.3cm}

\begin{figure}[H]
	\centering
		\includegraphics[width=0.41\textwidth]{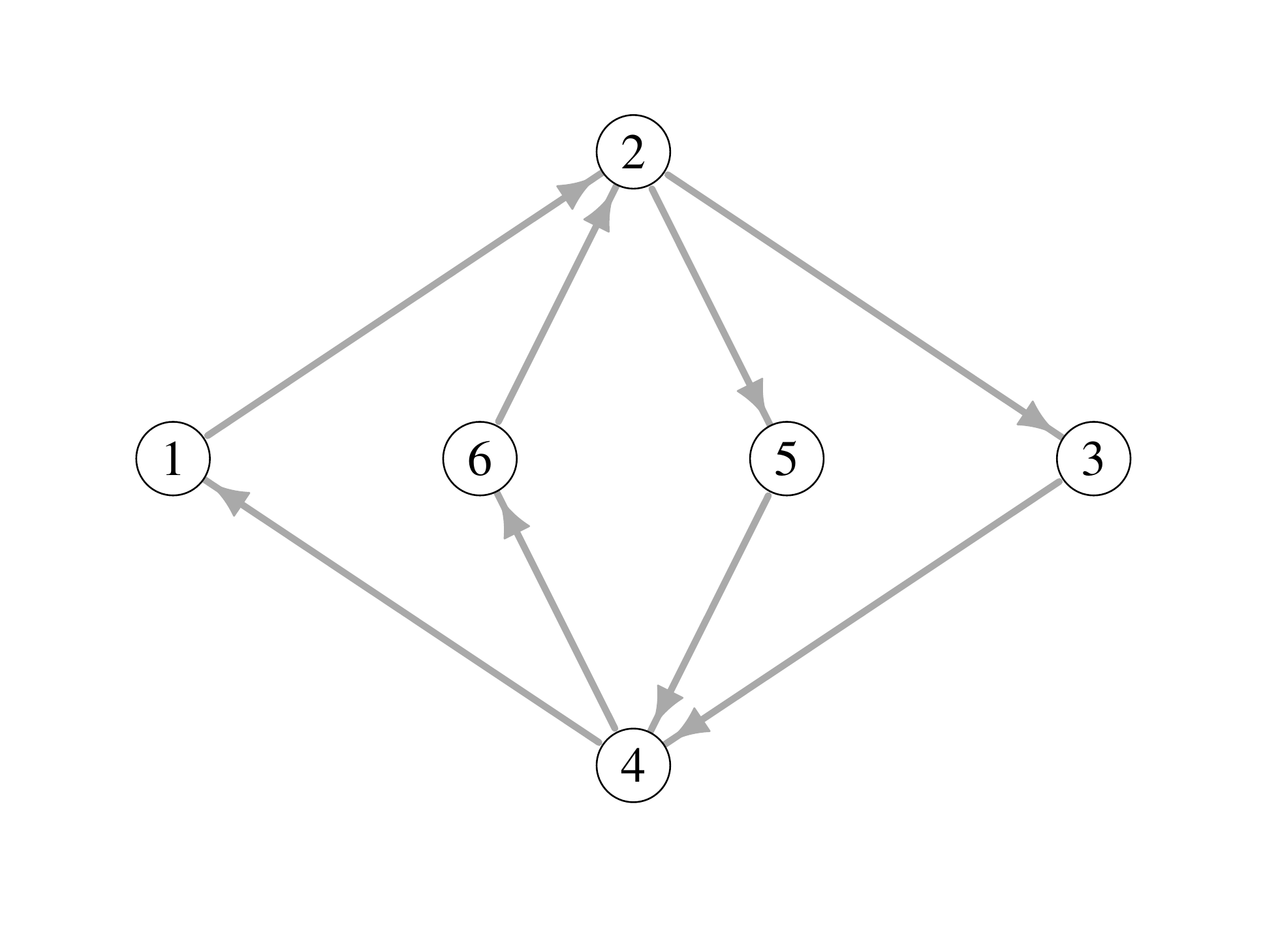}
	\vspace{-1cm}
 \caption{\small{Two simple cycles with two common vertices.}}
	\label{fig:4}
\end{figure}

\noindent Clearly, $f_{ij}(h_3) = 0$ for $i \neq j$. We compute the values $f_{ii}(h_3)$ by enumerating all ways to travel across $h_3$ starting from $v_i$. For instance, the two ways from $v_1$ are 
\begin{align*}
  & 1 \rightarrow 2 \rightarrow 3 \rightarrow  4 \rightarrow  6 \rightarrow 2 \rightarrow 5 \rightarrow 4 \rightarrow 1 \\
  & 1 \rightarrow 2 \rightarrow 5 \rightarrow  4 \rightarrow 6 \rightarrow 2 \rightarrow 3 \rightarrow 4 \rightarrow 1 
 \end{align*}
We find $f_{11}(h_3) = f_{33}(h_3)  = f_{55}(h_3) = f_{66}(h_3) = 2$ and $f_{22}(h_3) =f_{44}(h_3) = 4$. Here again, $\mu_{ij}(h_3)$ is null for all $i,j = 1, \dots ,6$ as well as $\mu(h_3)$, while $\beta(h_3) = 4$. We verify the convolution equalities $\mu_{ii}(h_3) = \mu * f_{ii}(h_3)$ and $f_{ii}(h_3) = \beta * \mu_{ii}(h_3)$ for arbitrary vertices, e.g.
   \begin{align*}
 \mu*f_{11}(h_3) =&~ \mu(1)f_{11}(h_3)+\mu(c_{2})f_{11}(c_1)+\mu(c_{3})f_{11}(c_4)=   0 = \mu_{11}(h_3)
 \\  \beta*\mu_{22}(h_3) =&~ \beta(h_3)\mu_{22}(1)=   4 = f_{22}(h_3) 
 \end{align*}

\vspace{-0.7cm}

\begin{figure}[H]
	\centering
		\includegraphics[width=0.36\textwidth]{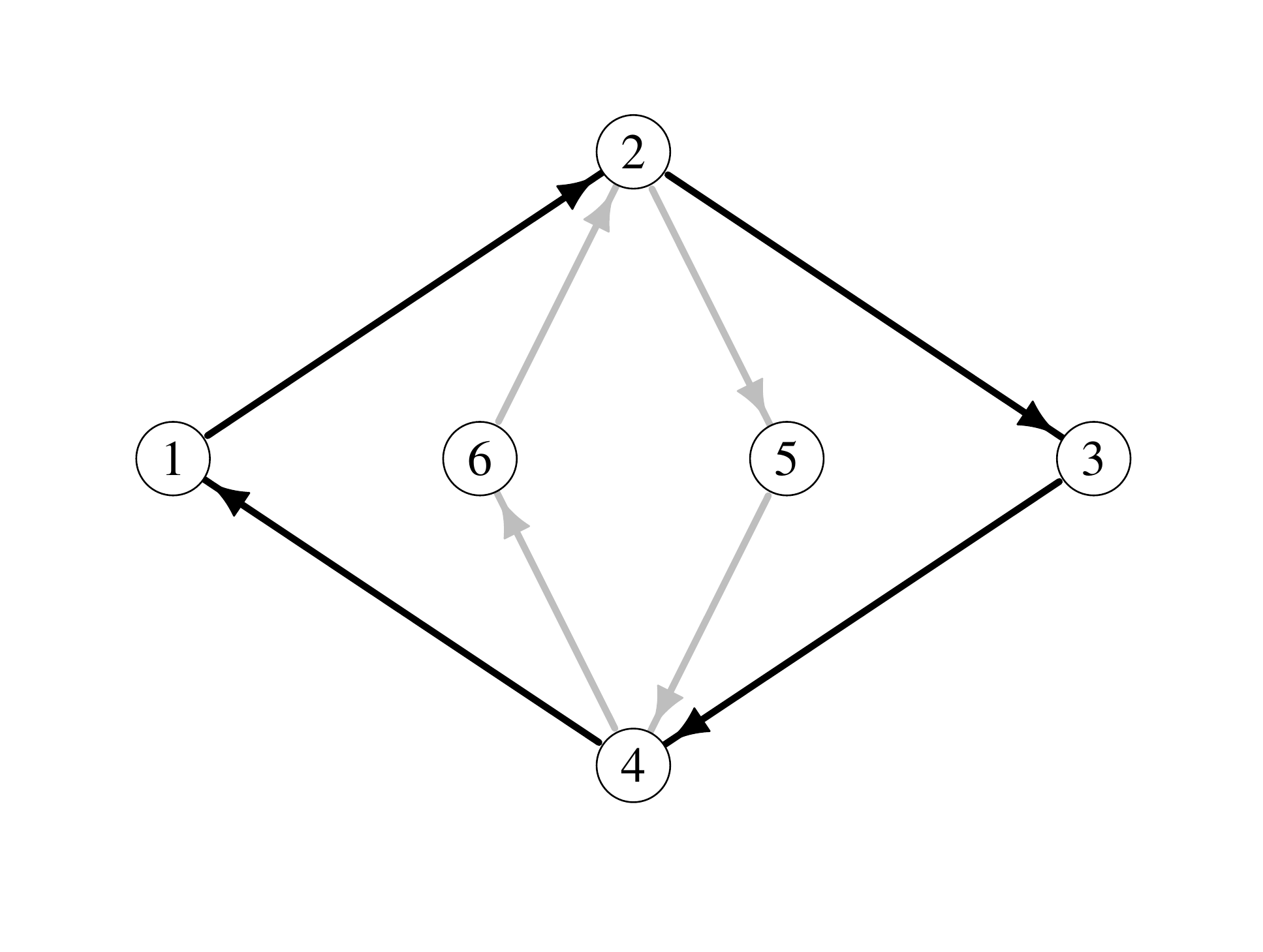} \hspace{0cm}\includegraphics[width=0.36\textwidth]{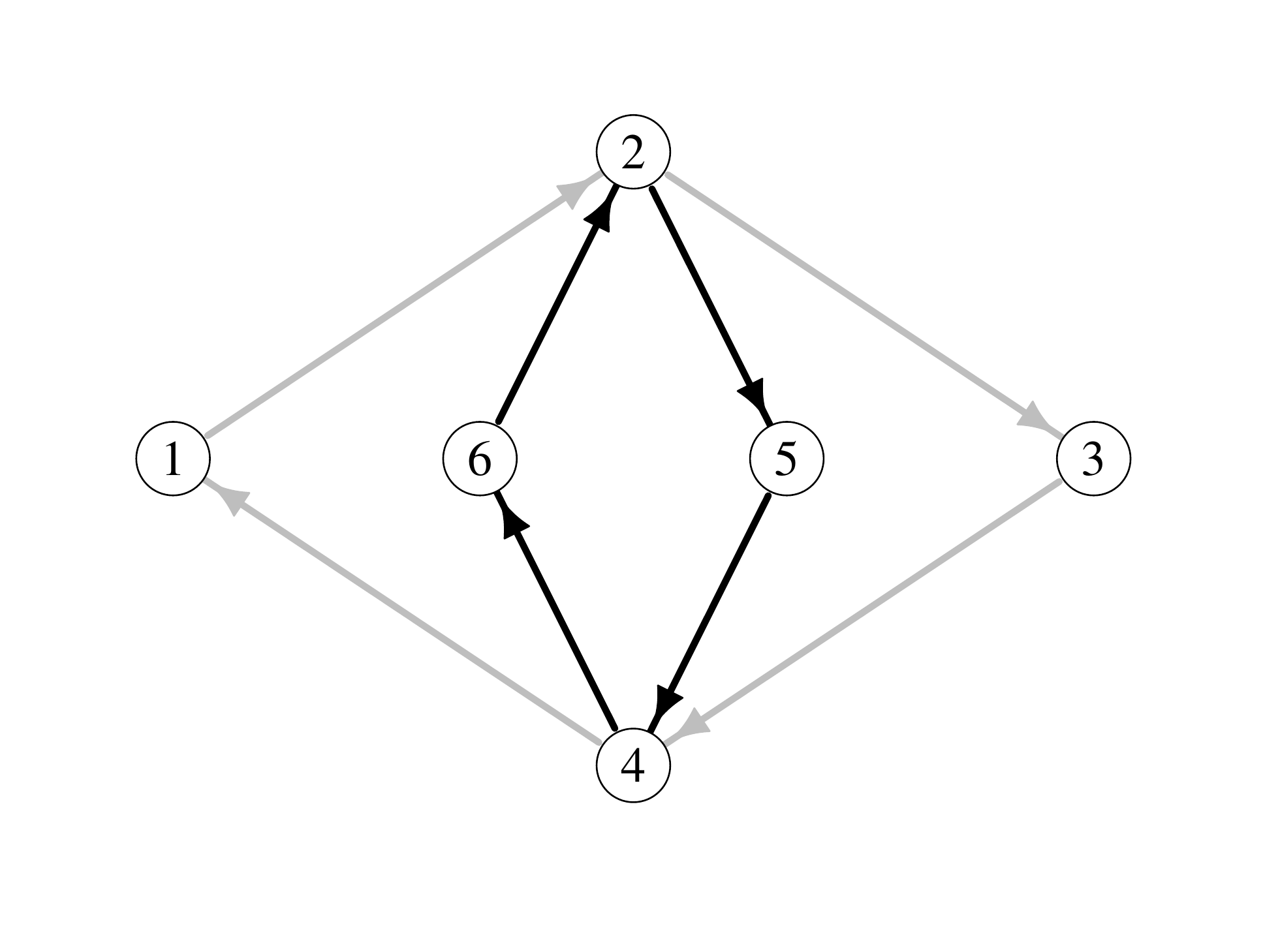}	\\
			\vspace{-0.5cm}
		\includegraphics[width=0.36\textwidth]{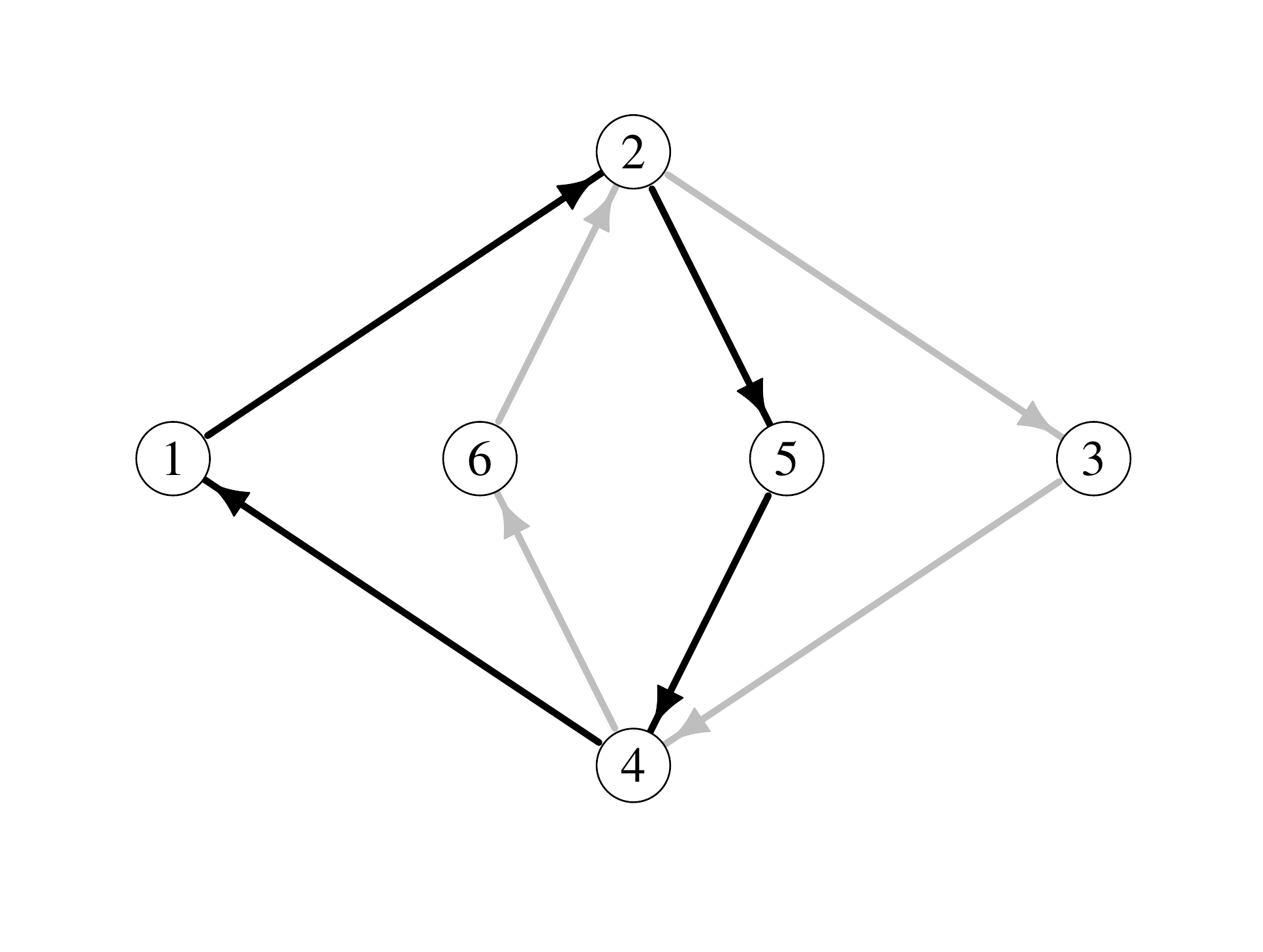} \hspace{0cm}\includegraphics[width=0.36\textwidth]{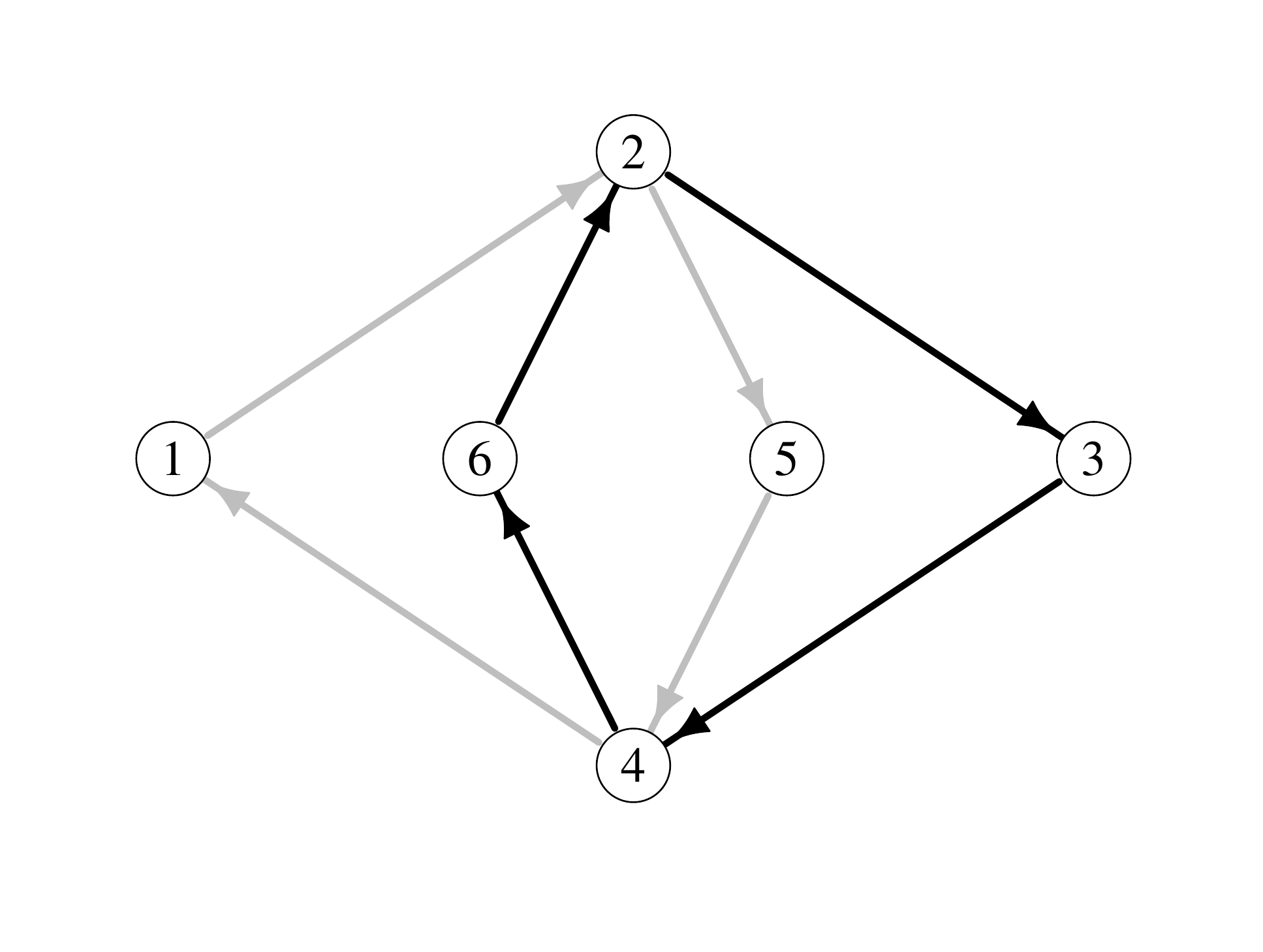}\\
		\vspace{-0.8cm}
\caption{\small{Simple divisors of $h_3$: $c_1 = \omega_{12}\omega_{23}\omega_{34}\omega_{41}$ (top-left), $c_2 = \omega_{25}\omega_{54}\omega_{46}\omega_{62}$ (top-right), $c_3 = \omega_{12}\omega_{25}\omega_{54}\omega_{41}$ (bottom-left) and $c_4 = \omega_{23}\omega_{34}\omega_{46}\omega_{62}$ (bottom-right).}} 
		\label{fig:4b}
\end{figure}

\noindent \textit{Example 4.} We consider the closed hike $h_4 =\omega_{12}\omega_{23}\omega_{35}\omega_{56}\omega_{64}\omega_{41}\omega_{25}\omega_{54}\omega_{42} $, illustrated in Figure \ref{fig:5}, composed of $2$ cycles sharing $3$ vertices. In this case, note that the orientation of the two cycles has an impact on the values of $f_{ii}$. \vspace{-0.2cm}
\begin{figure}[H]
	\centering
		\includegraphics[width=0.45\textwidth]{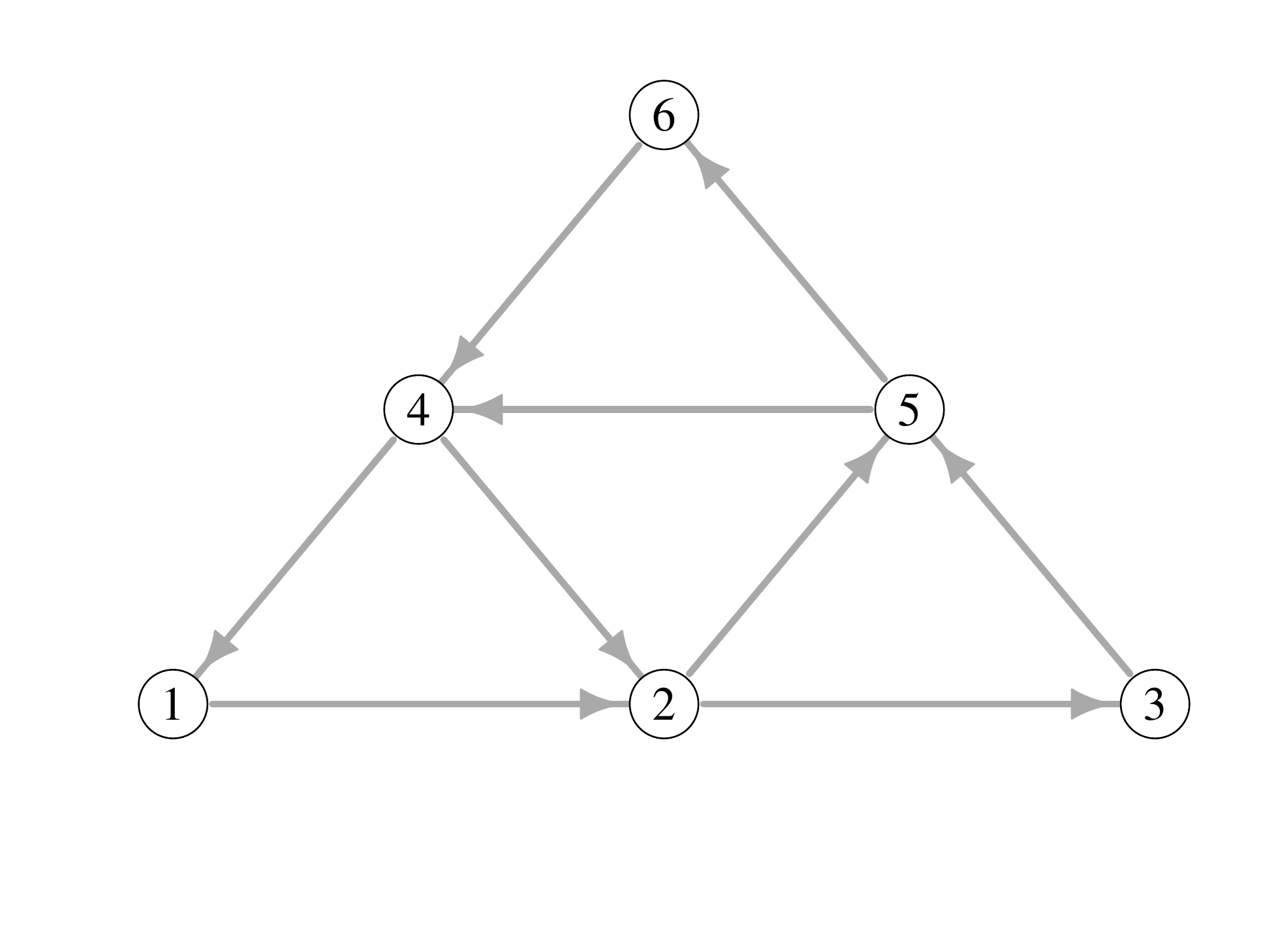}
		\vspace{-1cm}
 \caption{\small{Two simple cycles with three common vertices.}}
	\label{fig:5}
\end{figure}

\noindent  The walk $h_4$ has $8$ non-trivial divisors, detailed in Figure \ref{fig:5b}. 

\begin{figure}[H]
	\centering
		\includegraphics[width=0.35\textwidth]{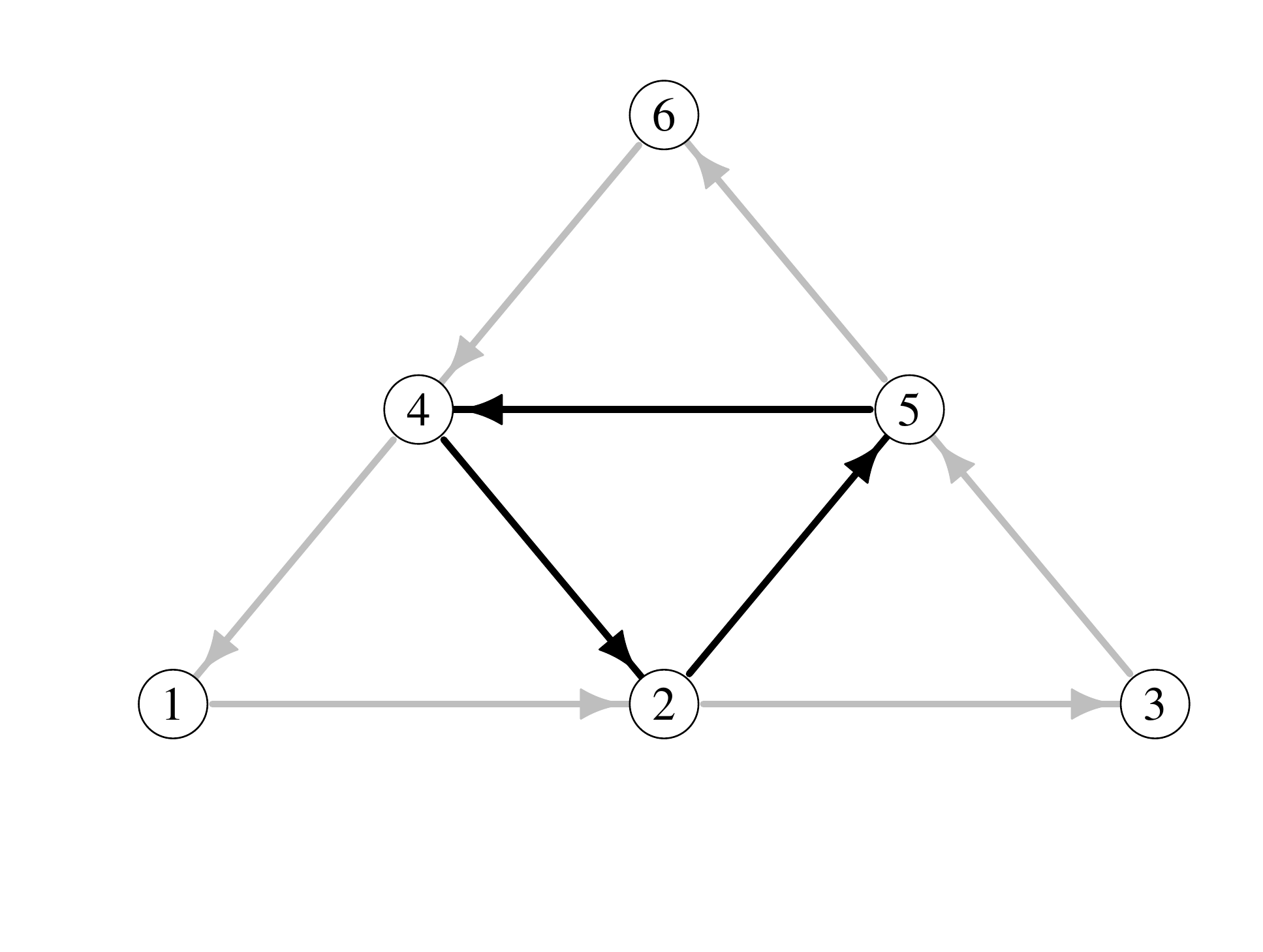} \hspace*{0.5cm} \includegraphics[width=0.35\textwidth]{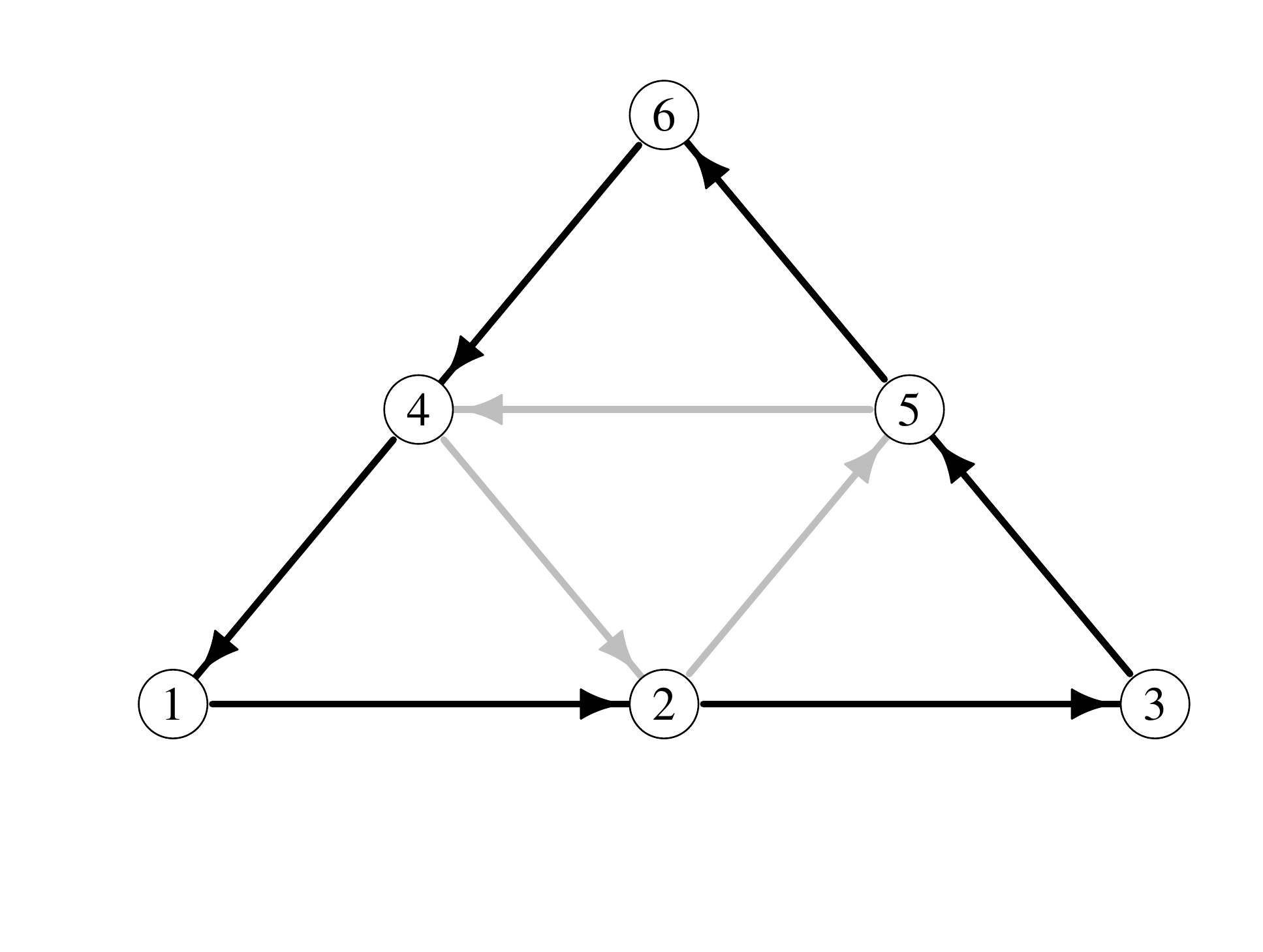} \\ \vspace{-0.8cm}
		
\includegraphics[width=0.35\textwidth]{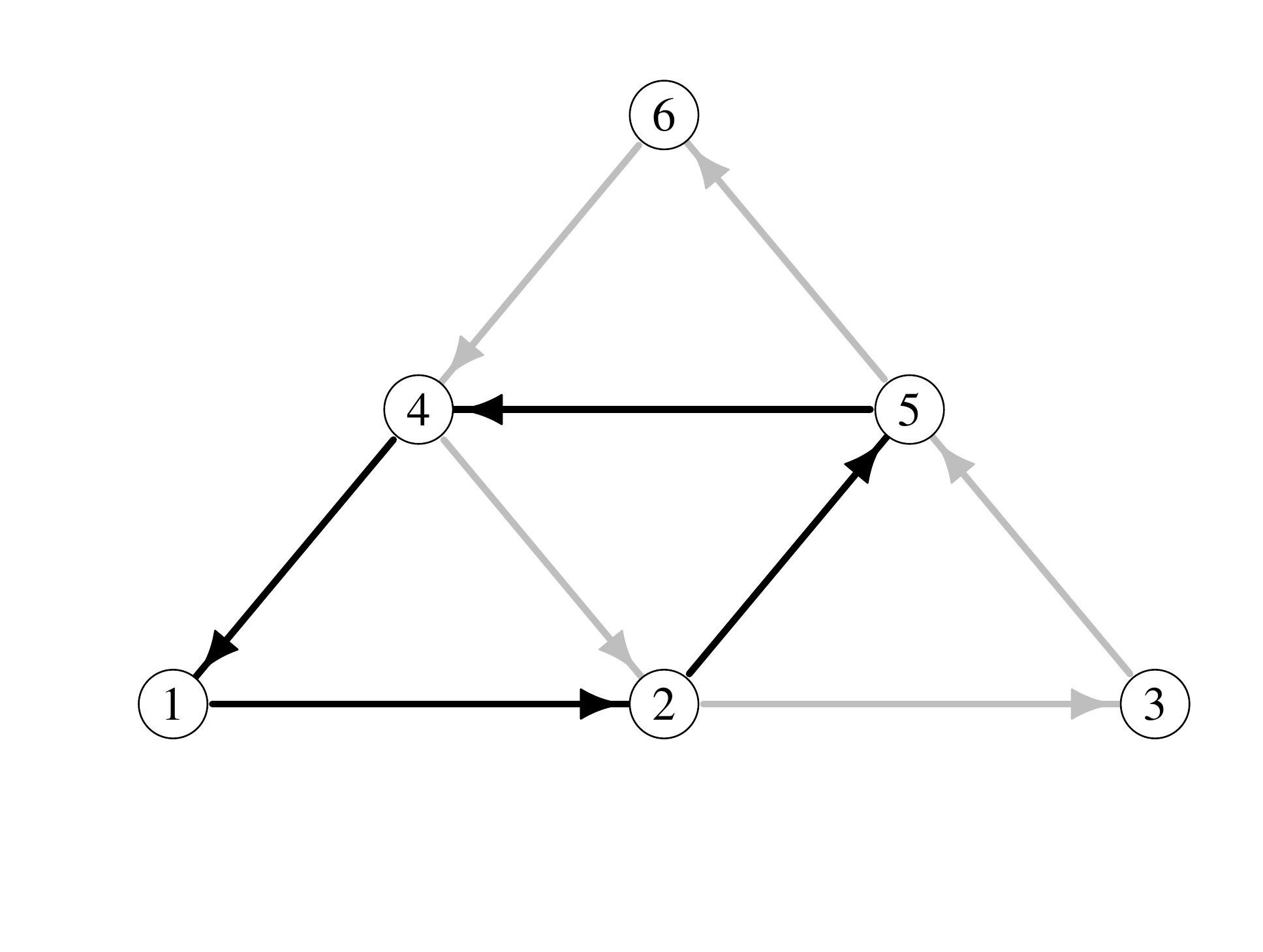} \hspace*{0.5cm} \includegraphics[width=0.35\textwidth]{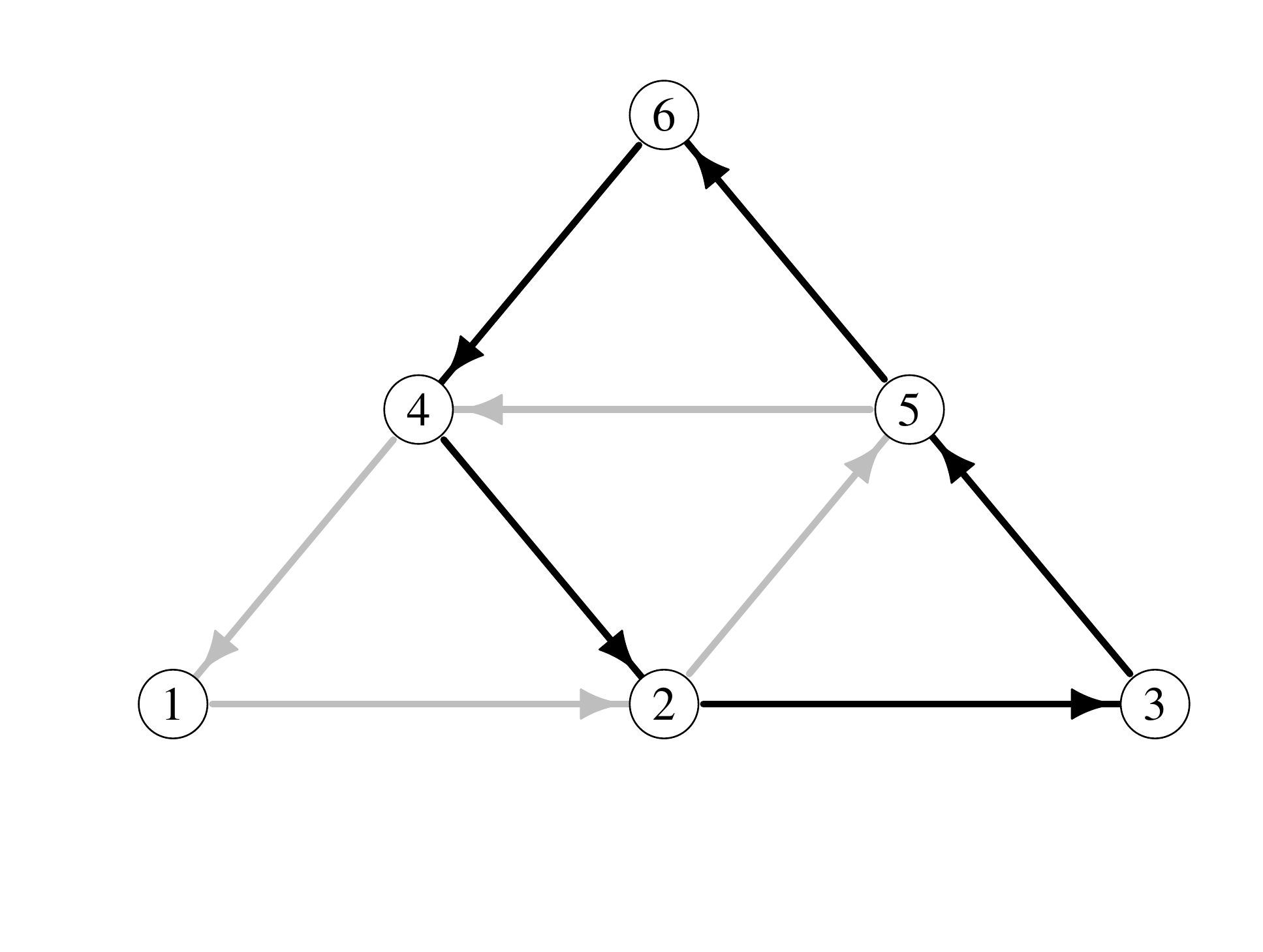} \\ \vspace{-0.8cm}

\includegraphics[width=0.35\textwidth]{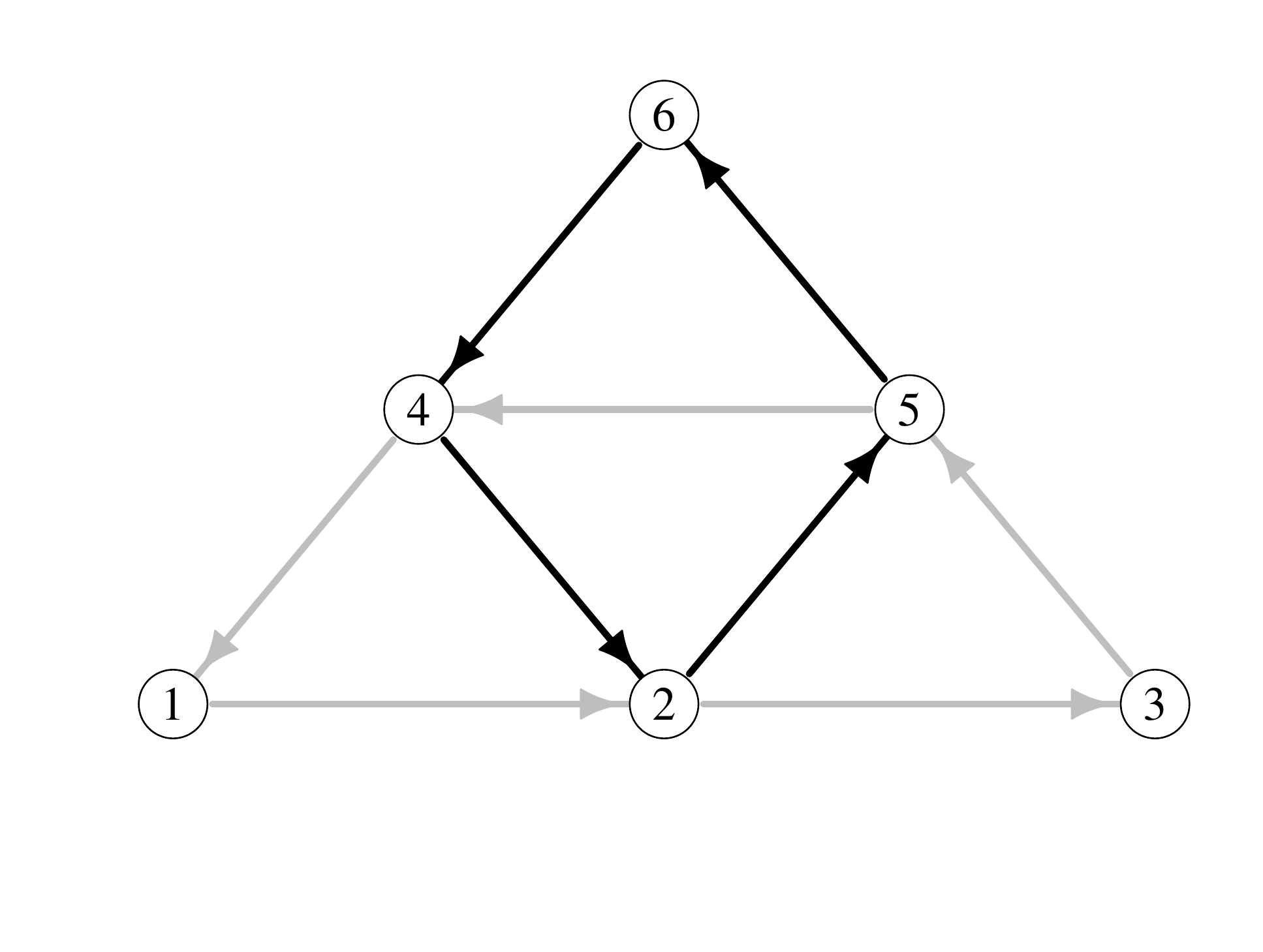} \hspace*{0.5cm} \includegraphics[width=0.35\textwidth]{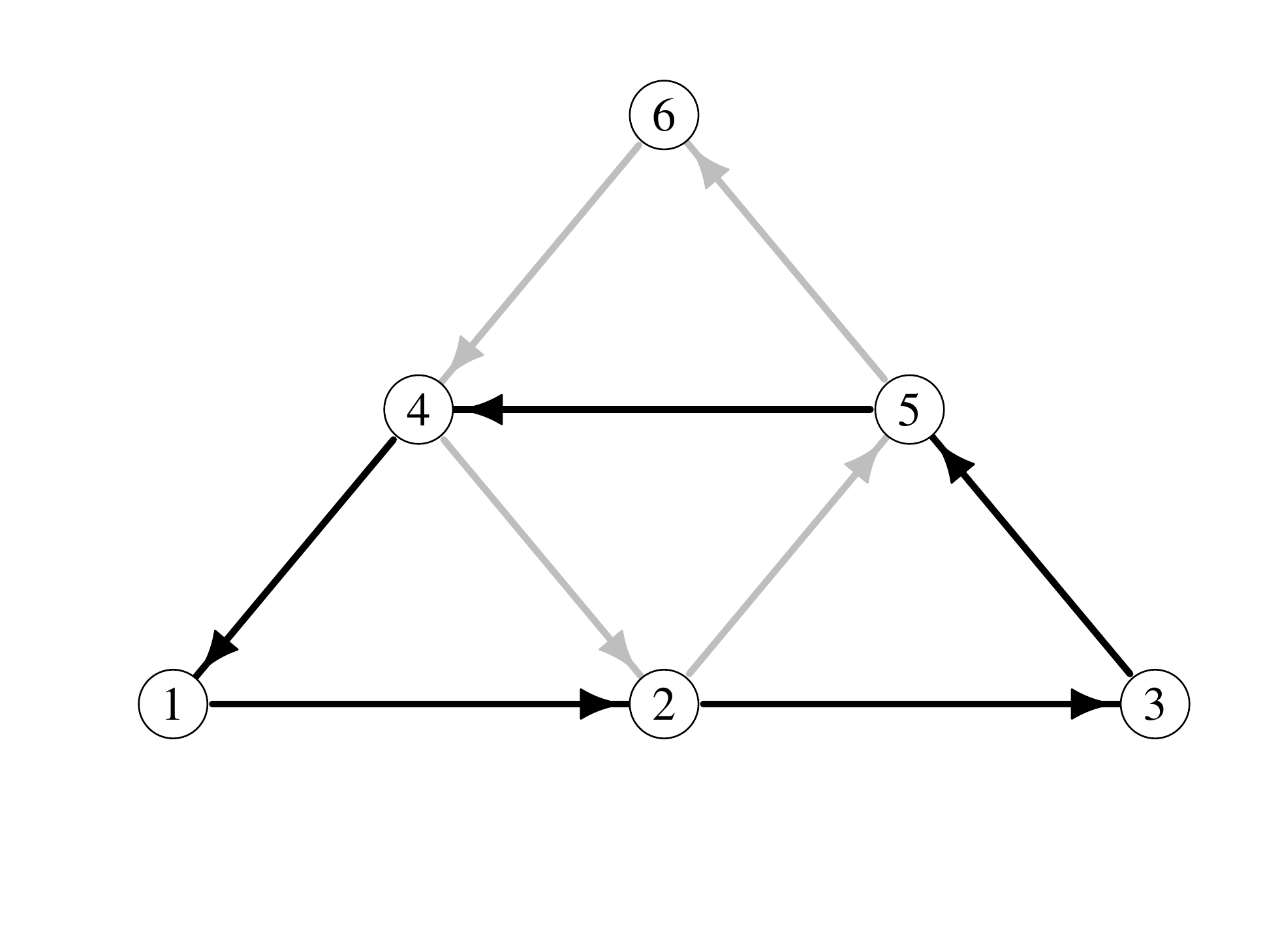} \\ \vspace{-0.8cm}

\includegraphics[width=0.35\textwidth]{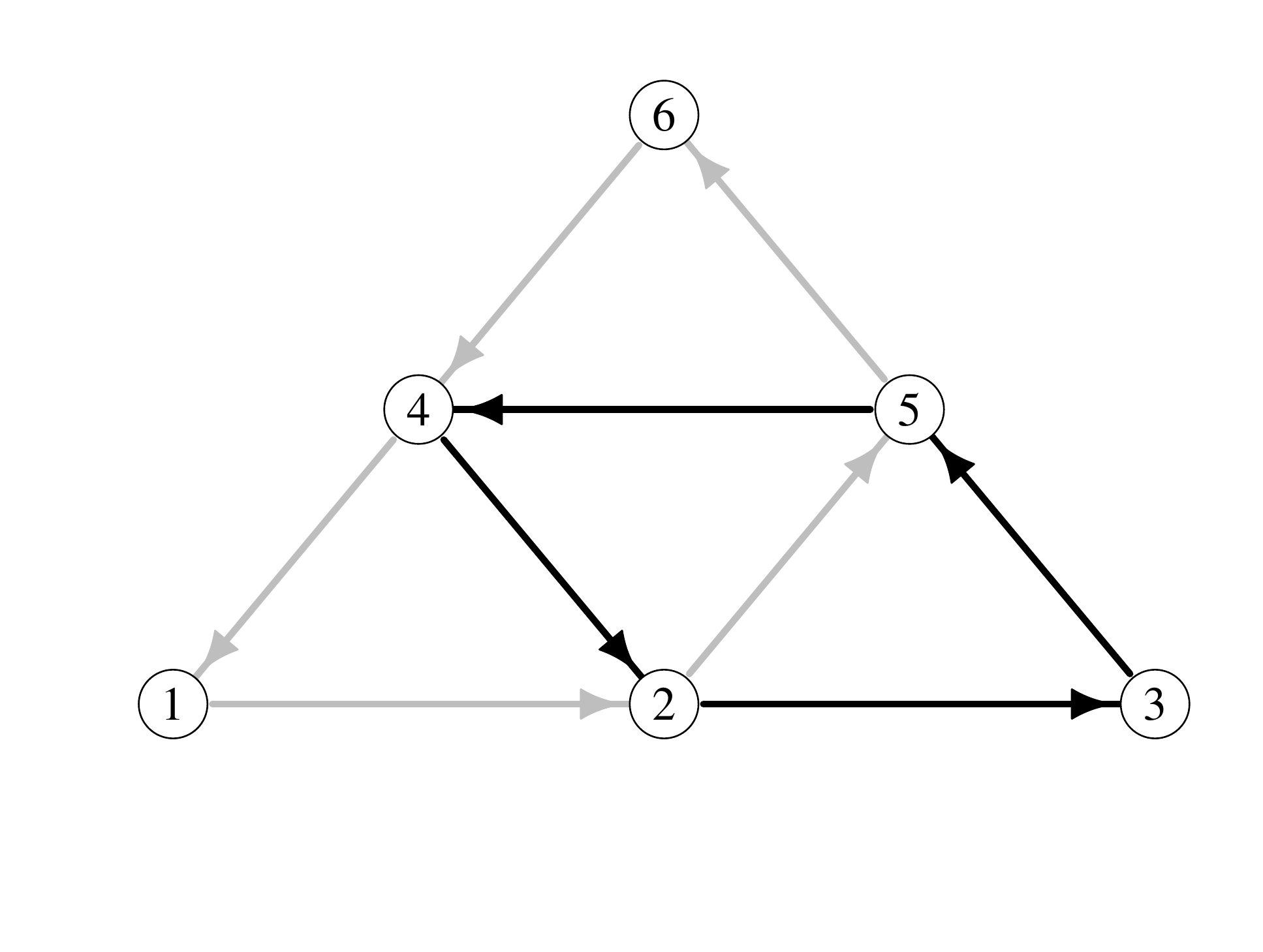} \hspace*{0.5cm} \includegraphics[width=0.4\textwidth]{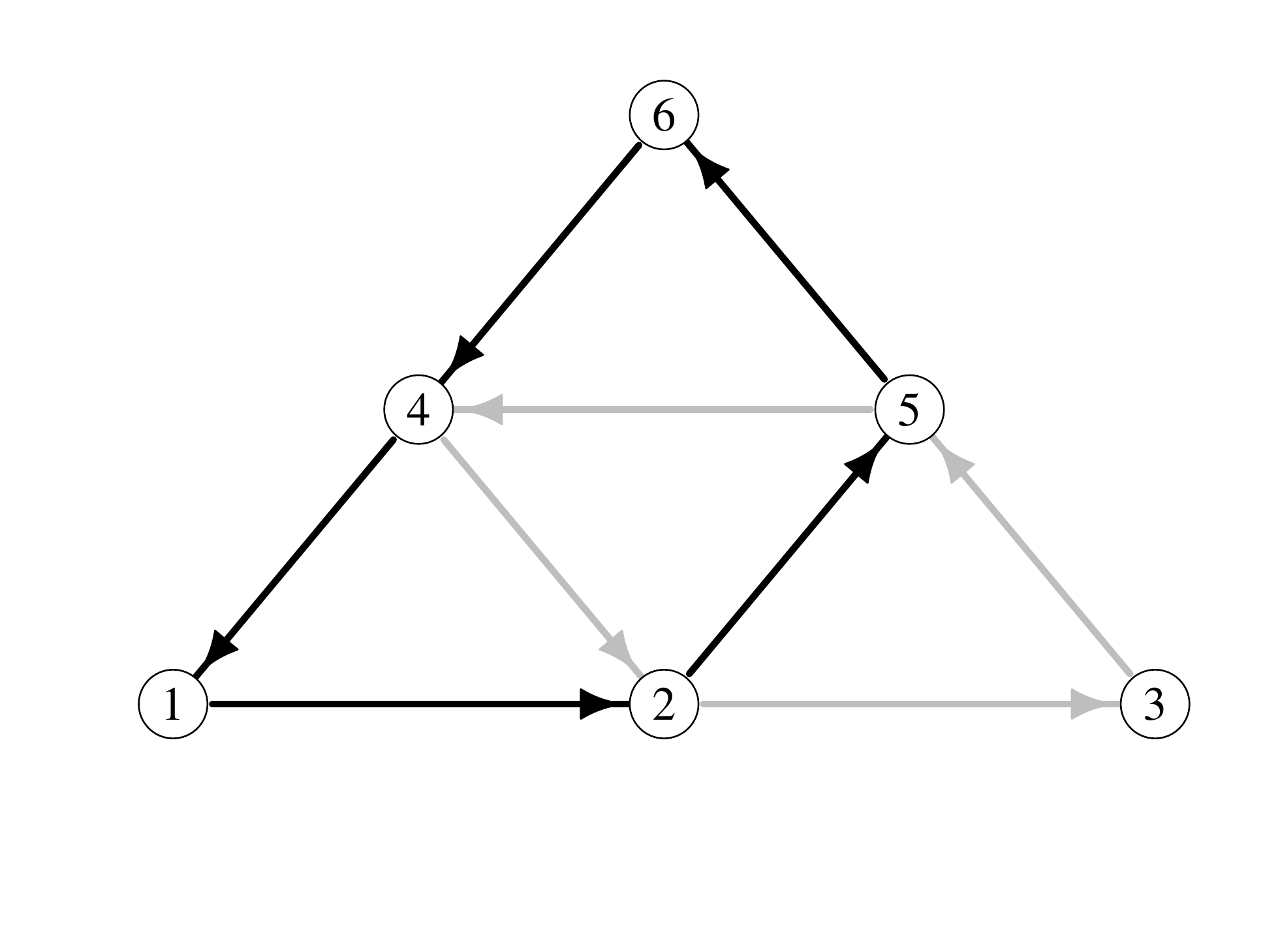} \\ \vspace{-0.6cm}
 \caption{\small{Simple divisors of $h_4$. From top-left to bottom-right: $c_1 = \omega_{25}\omega_{54}\omega_{42}$, $c_2 = \omega_{12}\omega_{23}\omega_{35}\omega_{56}\omega_{64}\omega_{41}$, $c_3 = \omega_{12}\omega_{25}\omega_{54}\omega_{41}$, $c_4 = \omega_{23}\omega_{35}\omega_{56}\omega_{64}\omega_{42}$, $c_5 = \omega_{25}\omega_{56}\omega_{64}\omega_{42}$, $c_6 = \omega_{12}\omega_{23}\omega_{35}\omega_{54}\omega_{41}$, $c_7 = \omega_{23}\omega_{35}\omega_{54}\omega_{42}$ and $c_8 = \omega_{12}\omega_{25}\omega_{56}\omega_{64}\omega_{41}$.}} 
\label{fig:5b}
\end{figure}

\noindent We know that $f_{ij}(h_4) =0$ for $i \neq j$, $\mu_{ij}(h_4)=0$ for all $i,j=1, \dots 6$, $\mu(h_4)=0$ and $\beta(h_4) = 8$. Counting the connected paths starting from each vertex gives $f_{11}(h_4)=f_{33}(h_4)=f_{66}(h_4)=4$ and $f_{22}(h_4) = f_{44}(h_4) = f_{55}(h_4) = 8$. We recover the correct values from the identities $\mu_{ii}(h_4) = \mu*f_{ii}(h_4)$ and $f_{ii}(h_4) = \beta*\mu_{ii}(h_4)$. Keeping only the non-zero values in the convolution gives, for instance
\begin{align*}
  \mu_{22}(h_4) & = \mu(1)f_{22}(h_4)+\mu(c_{2})f_{22}(c_1)+ \mu(c_{1})f_{22}(c_2)+\mu(c_{4})f_{22}(c_3)+\mu(c_{3})f_{22}(c_4) \\
	& \ \ \ \ \ + \mu(c_{6})f_{22}(c_{5})+ \mu(c_{5})f_{22}(c_6)+\mu(c_{8})f_{22}(c_7)+\mu(c_{7})f_{22}(c_8)\\ 
& = 8 - 1  - 1  - 1  - 1  - 1  - 1  - 1  - 1   =  0  \\
f_{11}(h_4) & = \beta(h_4)\mu_{11}(1)+ \beta(c_2)\mu_{11}(c_1)+\beta(c_3)\mu_{11}(c_4)+ \beta(c_6)\mu_{11}(c_5)+ \beta(c_8)\mu_{11}(c_7) \\ 
& = 8 - 1  - 1  - 1  - 1 = 4.
\end{align*}

\noindent \textit{Example 5.} Consider a closed hike $h_5$ composed of two simple cycles of opposite directions crossing $n$ times. This example can be represented as $n$ cycles placed one after the other. As we observed in the previous examples, the length of these cycles does not impact the values of the functions $f_{ii}, \mu_{ii}, \mu$ and $\beta$ so that we can take cycles of length $2$ without loss of generality, considering for instance the closed hike $\omega_{12}\omega_{21}\omega_{23}\omega_{32}\cdots \omega_{n1}\omega_{1n}$ illustrated in Figure \ref{fig:6}. 

\begin{figure}[H]
	\centering
		\includegraphics[width=0.5\textwidth]{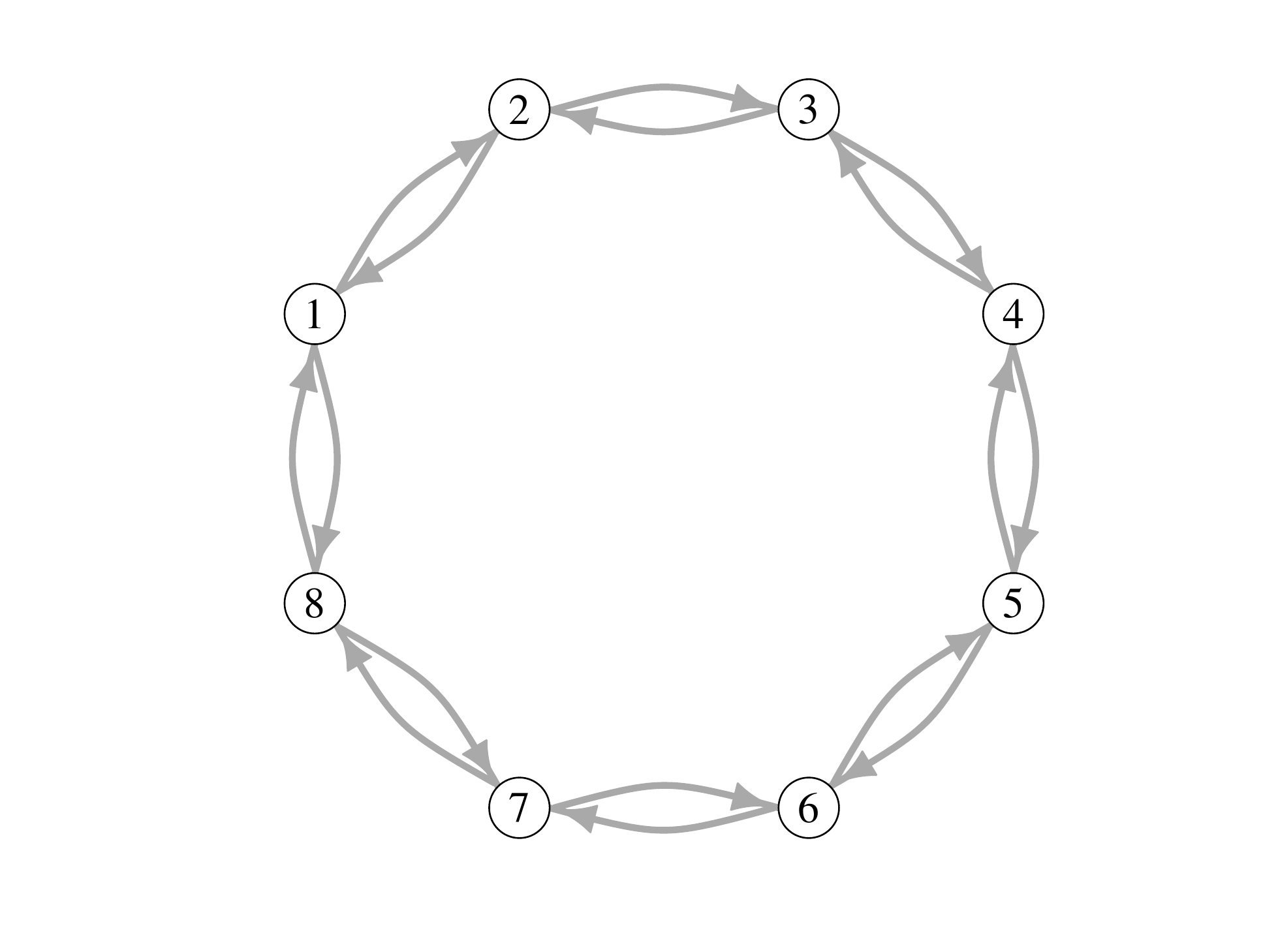}
		\vspace{-0.8cm}
 \caption{\small{Two simple cycles with $n=8$ common vertices.}} 
	\label{fig:6}
\end{figure}

\noindent We find $f_{ij}(h_5) = 0$ for $i \neq j$, $f_{ii}(h_5) = 2n$, $\mu_{ij}(h_5) = 0$ for all $i,j= 1, \dots n$, $\mu(h_5) = 0$ and $\beta(h_5)= 2^n$. Let $a_k =\omega_{k k+1}\omega_{k+1 k}, \ k =1, \dots ,n-1$, $a_n = \omega_{n1}\omega_{1n}$ be the length $2$ divisors of $h_5$ and $d_0 = \omega_{12}\omega_{23} \cdots \omega_{n-1n}\omega_{n1}$, $d_1 =\omega_{21}\omega_{32} \cdots \omega_{nn-1}\omega_{1n}$ the outside and inside cycles composing $h_5$. The non-trivial divisors of $h_5$ are $d_0, d_1$ and every product $a_{k_1}\cdots a_{k_p}$ obtained for a subset $\{k_1, \cdots , k_p\}$ of $\{1,...,n\}$. Using that $f_{11}$ is non-zero only for walks passing through $v_1$ and $\mu$ vanishes for non self-avoiding closed hikes, we obtain by keeping only the non-zero terms in the Dirichlet convolution
 \begin{align*}
\mu*f_{11}(h_5)  = \mu(1)f_{11}(h_5) + \mu (d_1)f_{11}(d_0)+ \mu(d_0)f_{11}(d_1)+ \sum_{k = 2}^{n} \mu(a_k)f_{11}\Big(\frac{h_5}{a_k}\Big) 
 \end{align*}
which recovers ultimately $ \mu*f_{11}(h_5)=  2n -2 -2(n-1) = 0 = \mu_{11}(h_5)$. The reverse relation $f_{11}(h_5)= \beta*\mu_{11}(h_5)$ is less trivial. To compute it, we have to enumerate for any $p$, the sets $\{k_1,\cdots, k_p\} \subset \{ 1,...,n \}$ such that $\mu_{11}(a_{k_1} \cdots a_{k_p}) \neq 0$, i.e., such that $a_{k_1} \cdots a_{k_p}$ is a self-avoiding closed hike that does not cross $v_1$. For each such closed hike, the complement $h_5/(a_{k_1} \cdots a_{k_p})$ is composed of $p$ disjoint connected components, one of which contains $a_1 a_n$. This component can be divided into two connected components by separating $a_1$ and $a_n$. Thus, each closed hike $a_{k_1} \cdots a_{k_p}$ such that $\mu_{11}(a_{k_1} \cdots a_{k_p}) \neq 0$ can be associated with a composition of $n-p$ containing $p+1$ elements, which there are $\binom{n-p-1}{p}$ of them. For each $a_{k_1} \cdots a_{k_p}$, the coefficient $\beta$ of the complement equals $2^{n-2p}$ and one recovers the formula
 \begin{align*}  \beta*\mu_{11}(h_5) 	= \sum_{p = 0}^{\lfloor \frac{n-1}{2}\rfloor} \binom{n-p-1}{p} (-1)^p \ 2^{n-2p} = 2n = f_{11}(h_5).
   \end{align*}

\noindent \textit{Example 6.} Consider a self-avoiding closed hike $h_6$ composed of $n\geq 2$ simple connected components $a_1,...,a_n$ (we may assume without loss of generality that each connected component is a loop as illustrated in Figure \ref{fig:7}).

\begin{figure}[H]
\hspace*{0.4cm}\includegraphics[width=0.5\textwidth]{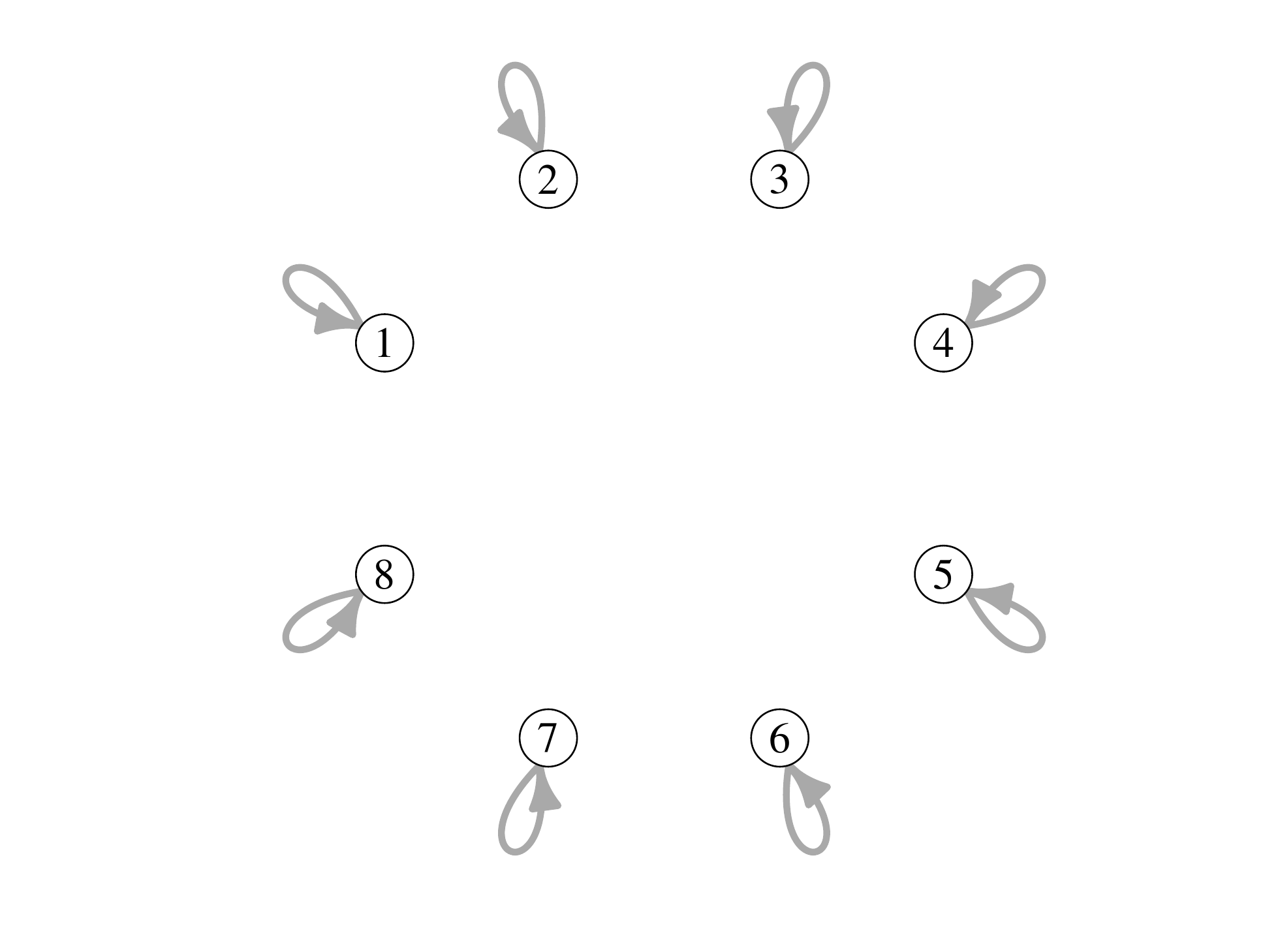}	\hspace{-1.1cm}	\includegraphics[width=0.5\textwidth]{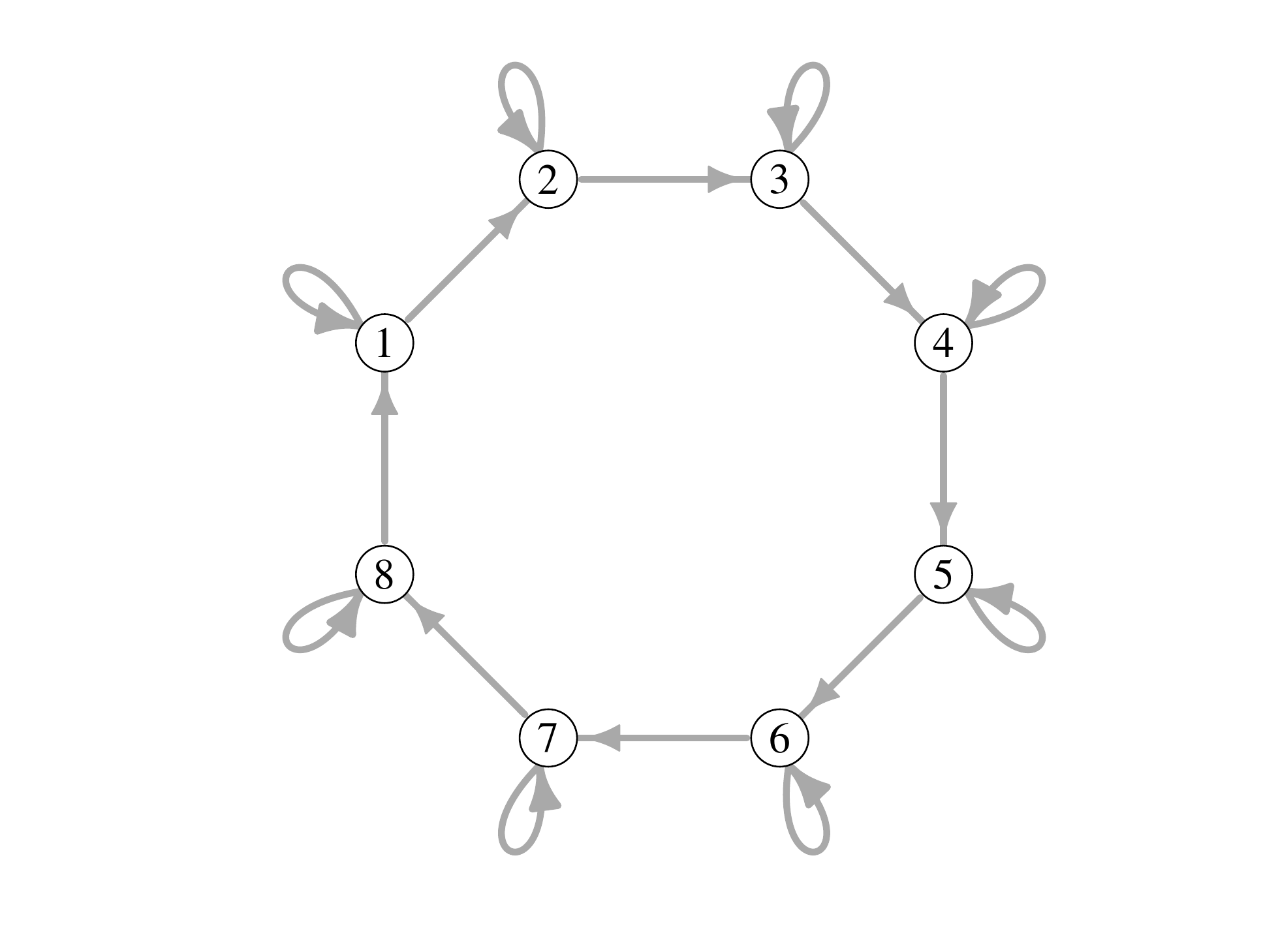}	
\vspace{-0.7cm}
	\caption{\small{Illustration of the hikes considered in Examples 6 (left) and 7 (right) for $n=8$.}}	\label{fig:7}
\end{figure}

\noindent From the first expression of $\beta$ given in Theorem \ref{betaexpl}, it is clear that $\beta(h_6) = 1$. On the other hand, the number of ways to decompose $h_6$ into a product of $k \leq n$ non-empty self-avoiding closed hikes writes as the sum of the multinomial coefficients over all positive compositions $n_1,...,n_k$ of $n$. Since for any self-avoiding decomposition $s_1,...,s_k$, the product $\mu(s_1) ... \mu(s_k)$ always equals $\mu(h_6) = (-1)^n$, combining the two expressions of $\beta(h_6)$ yields
$$ \beta(h_6) = 1 =  (-1)^n \sum_{k=1}^n \ (-1)^k \!\! \!\! \sum_{n_1 + ... + n_k = n} \frac{n!}{n_1! ... n_p!}.   $$ 
Alternatively, this equality can be obtained by identifying the coefficient of $x^k$ in the power series expansions of $e^{-x} = 1/e^x$. Since $h_6$ is not connected, we know that $f_{ii}(h_6)=0$ for all $i$. To compute the expression of $f_{ii}(h_6)$ from Theorem \ref{walk-covering}, we consider the self-avoiding decompositions of the form $h_6=s_1...s_k p$ with $s_1,...,s_k \in \mathcal C \cap \mathcal S \setminus \{ 1 \}$ and $p \in \mathcal S_{ii}$. To verify that this expression gives $f_{ii}(h_6)=0$ in this case simply observe that any self-avoiding decomposition $s_1, ..., s_k, p$ such that $p \neq \omega_{ii}$ cancels out with the decomposition $s_1, ..., s_k, p/\omega_{ii}, \omega_{ii}$ in view of
$$ (-1)^{n'(s_1)+...+n'(s_k)+n'(w)} = - (-1)^{n'(s_1)+...+n'(s_)+n'(p/\omega_{ii}) + n'(\omega_{ii})}.  $$
Thus, summing over all self-avoiding decompositions recovers $f_{ii}(h_6)=0$. \\

\noindent \textit{Example 7.} We now consider the closed hike $h_7$ constructed from the previous example with an extra cycle $c_0$ passing through each vertex: $h_7 = h_6 \times c_0$ (e.g. in the right graph in Figure \ref{fig:7} where $c_0=\omega_{12}\omega_{23}\omega_{34}\omega_{45}\omega_{56}\omega_{67}\omega_{78}\omega_{81}$). In this example, the cycle $c_0$ is isolated in every self-avoiding decomposition since it shares a common node with all the other divisors of $h_7$. Thus, the different ways to express $h_7$ as a product of self-avoiding closed hikes can be obtained from the previous example, inserting the cycle $c_0$ wherever possible. Precisely, for a decomposition $h_6=s_1...s_k$ of $h_6$ into $k \leq n$ non-empty self-avoiding closed hikes, there are exactly $k+1$ possibilities to insert $c_0$. Moreover, remark that $\mu(s_1)...\mu(s_k) \mu(c_0) = \mu(h_6) \mu(c_0) = (-1)^{n+1}$ is constant over all self-avoiding decompositions. Combining the two expressions of $\beta(h_7)$ thus recovers the formula
	$$ \sum_{k =1}^n (-1)^{k+1} (k+1) \!\!\!\!  \sum_{n_1 + ... + n_k = n}\!\!\!\!  (-1)^{n+1} \frac{n!}{n_1! ... n_k!} = 2^n.  $$
In this example, there are two ways of visiting the whole walk $h_7$ from one vertex $v_i$ to itself, depending on whether the loop at $v_i$ is traveled at the start or at the end. Thus, $f_{ii}(h_7) = 2$ for all $v_i$. In a self-avoiding decomposition with $s_1,...,s_k \in \mathcal C \cap \mathcal S\setminus \{ 1 \}$ and $p \in \mathcal S_{ii}$ we can distinguish the cases $p = c_0$, $p=\omega_{ii}$ and $p \neq \omega_{ii}, c_0$. Clearly, the sum over all self-avoiding decompositions $h_7= s_1...,s_k p$ such that $p= c_0$ yields $\beta(h_6)$ since $(-1)^{n'(c_0)} = 1$. Moreover, the sum over all self-avoiding decompositions with $p = \omega_{ii}$ recovers $\beta(h_7/\omega_{ii}) = 2^{n-1}$. Finally, for a self-avoiding decomposition $h_6 =s_1 ... s_k p$ of $h_6$ with $p \neq \omega_{ii}$, there are $k$ possibilities to insert $c_0$, yielding
$$ f_{ii}(h_7) = \beta(h_6) + \beta \Big( \frac {h_7} {\omega_{ii}} \Big)  + \sum_{k =1}^{n-1} (-1)^{k+1} k \!\!\!\!\!\! \sum_{n_1 + ... + n_k = n-1} \!\!\!\!\!\! (-1)^{n+1} \frac{(n-1)!}{n_1! ... n_k!} = 1 + 2^{n-1} - 2^{n-1} + 1 = 2. $$

\section*{Acknowledgments}
The authors are grateful to Pierre-Louis Giscard for his explanations on the poset structure of hikes, and to an anonymous referee for its helpful comments which helped improve this paper.

 \bibliographystyle{plain}
 \bibliography{biblio}  

\end{document}